\newcommand{\subjclass}{%
	\textup{2020} \textbf{Mathematics Subject Classification:}}
\title{On $\widetilde{\operatorname{Spec}(M)}$ Topology of Module $M$ over Commutative Rings}
\author{Dilara Erdemir, Suat Koç, Ünsal Tekir and Mesut Buğday}
\date{}
\begin{document}

\maketitle
\begin{center}
		{\footnotesize 
			
			 Department of Mathematics, Yıldız Technical University, Istanbul, Turkey \\
			
			E-mail: dilaraer@yildiz.edu.tr \\

             Department of Mathematics, Marmara University, Istanbul, Turkey \\
E-mails: suat.koc@marmara.edu.tr \\ utekir@marmara.edu.tr \\ mesut.bugday@marmara.edu.tr
		}
	\end{center}
\bigskip
	\noindent
	{\small{\bf ABSTRACT.}
		Let $R$ be a commutative ring with unity and $M$ be an $R$-module. In this study, we construct the $\widetilde{\operatorname{Spec}(M)}$ topology using the prime spectrum of module $M$ and multiplicatively closed subsets of $R$ with the closed sets $\widetilde{V}(S)=\{P \in \operatorname{Spec}(M)|\ (P:M) \cap S_i \neq \emptyset\text{ for all} \ i \in I \}$ with the open sets $\widetilde{D}(S_i):=\{P \in \operatorname{Spec}(M)|\ (P:M) \cap S_i = \emptyset \}$ where $S=\{S_i\}_{i \in I}$ is a family of multiplicatively closed subsets of $R$. We investigate connections between the algebraic properties of $R$-module $M$ and the topological properties of $\widetilde{\operatorname{Spec}(M)}$. We examine specifically the separation axioms, connectivity, nested and Lindelöf property together with quasi-compactness as well as the isolated, closure, interior and limit points of $\widetilde{\operatorname{Spec}(M)}$. Moreover, in the last section, we provide an example of a Lindelöf space which is not quasi-compact by means of $\widetilde{\operatorname{Spec}(M)}$.       
	}
	
	\medskip
	\noindent
	{\small{\bf Keywords}{:} 
	Prime spectrum, multiplicatively closed subset, prime avoidance, connected space, discrete space

	}
    \subjclass{\small 13C99, 13J99, 54B99, 54D05}
\section{Introduction}
Construct topologies on algebraic structures and investigating the fundamental and topological properties of these topologies is a rather broad studied topic in commutative algebra and algebraic topology. The connections between topology types and algebraic structures are of interest to mathematicians. It is also possible to use topology to investigate and solve some algebraic problems, and algebraic topology also allows building a bridge between geometry and algebra. Another importance of algebraic topology is that it analyzes complex topological problems through algebraic structures.

 The set of all prime ideals of a ring is called the prime spectrum of the ring. The Zariski topology is a topology defined on the prime spectrum of commutative rings and was first introduced by Oscar Zariski in 1952. The connections between the properties of algebraic structures and the Zariski topology were studied extensively in many studies, see \cite{Clzar,LuYu, McMo2, Uzar, SzarS}. The Zariski topology on the prime spectrum of $R$-module $M$ was first initiated by Lu \cite{Lu} in which for any submodule $N$, the closed sets of the topology are the varieties $V(N)=\{P \in \operatorname{Spec}(M) |\ N \subseteq P\}$. In addition to extending the Zariski topology to modules, researchers also examined the connections of Zariski topologies on modules with different types of submodules through studies such as classical S-Zariski topology and S-Zariski topology on S-spectrum of modules and so on. 
 
 Recently, Parsa and Moghimi \cite{ParsaTopb} introduced $\widetilde{\operatorname{Spec(R)}}$ topology that is finer than the Zariski topology and constructed by using the prime spectrum topology and multiplicative closed subsets of a commutative ring $R$.

In this study, we focus on modules over commutative rings with unity. Let $M$ be a module over a commutative ring $R$. We say a proper submodule $P$ of $M$ is a prime submodule of $M$ if $rm\in P$ for some $r\in R$ and $m\in M$ then either $m\in P$ or $r\in (P:M)$ where $(P:M)=\{r\in R: rM\subseteq P\}$. Moreover, if $P$ is a prime submodule then $(P:M)$ is a prime ideal, see \cite{Lu3}. We denote the set of all prime submodules of a module $M$ by $\operatorname{Spec}(M)$. A subset $S$ of $R$ is said to be a multiplicatively closed subset of $R$ whenever $1_R \in S$ and when $x,y \in S$, then $xy \in S$ \cite{Sharp}. We first define the following sets $\widetilde{V}(S):=\{P \in \operatorname{Spec}(M)|\ S_i \ \cap (P:M) \neq \emptyset \ \text{for all} \ S_i \in S\}$ and $\widetilde{D}(T):=\{P \in \operatorname{Spec}(M)|\ (P:M) \cap T = \emptyset \}$ and then prove that $\widetilde{V}(S)$ are closed subsets of $\widetilde{\operatorname{Spec}(M)}$ in the next theorem and also show that the sets $\widetilde{D}(S)$ are elements of the basis of this topology in Theorem \ref{opset} where $S=\{S_i\}_{i \in I}$ is a family of multiplicatively closed subsets of $R$ and $T$ is a multiplicatively closed subset of $R$. Moreover, this topology is called $\widetilde{\operatorname{Spec}(M)}$ topology of $R$-module $M$. One can see that this construction extends naturally to the $\widetilde{\operatorname{Spec(R)}}$ topology into modules over commutative rings by means of prime spectrum of $M$ and multiplicatively closed subsets of $R$.

The main goal of this paper is to find algebraic and topological connections between module $M$ and $\widetilde{\operatorname{Spec}(M)}$. For example, we show that isolated points of $\widetilde{\operatorname{Spec}(M)}$ are minimal prime submodules of $M$ (Lemma \ref{isolated}). In particular, we show for multiplication modules that $M$ is a zero-dimensional module if and only if each $P\in\widetilde{\operatorname{Spec}(M)}$ is an isolated point (Corollary \ref{isolated2}). Next, we discuss nested property, connectivity and the closure, interior and limit points of $\widetilde{\operatorname{Spec}(M)}$ (See Theorem \ref{nested}, Theorem \ref{connected}, Theorem \ref{intcl}).

In the last section, we study each separation axiom together with quasi-compactness and Lindelöfness (See Theorem \ref{T0},\ref{T1},\ref{Discrete},\ref{T3},\ref{tcomp},\ref{lindelöf},\ref{tLin} and Corollary \ref{comp}). Lastly, we present a method to produce an example of a Lindelöf space which is not quasi-compact.
\section{Fundamental properties of $\widetilde{\operatorname{Spec}(M)}$  topology}
Throughout this work, $M$ is always a module over a commutative ring $R$ with unity. We first fix the notations as follows. Let $I$ be an arbitrary index set and $S$ be a family of multiplicative subsets of $R$ so that $S=\{S_i\}_{i\in I}$. Next, we define the following set $\widetilde{V}(S):=\{P \in \operatorname{Spec}(M)|\ S_i \cap (P:M) \neq \emptyset \text{ for all } S_i\in S\}$ and construct $\widetilde{\operatorname{Spec}(M)}$ topology in terms of closed sets $\widetilde{V}(S)$ in the following theorem.

\theoremstyle{definition}
\newtheorem{theorem}{\textbf{Theorem}}
\theoremstyle{definition}
\newtheorem{lemma}{\textbf{Lemma}}
\newtheorem{corollary}{\textbf{Corollary}}
\begin{theorem}\label{closedsets}
Let $M$ be an $R$-module. Then the following statements hold.
\begin{enumerate}
    \item[(i)] $\widetilde{V}(\{1\})=\emptyset$.
    \item[(ii)] $\widetilde{V}(S)=\operatorname{Spec}(M)$ if $I=\emptyset$.
    \item[(iii)] $\bigcap_{j \in J}\widetilde{V}(S_j)=\widetilde{V}(\bigcup_{j \in J}S_j)$, where $S_j$ is a family of multiplicative closed subset of $R$ for every $j\in J$.
    \item[(iv)] If $S$ and $S^{'}$ are families of multiplicatively closed subsets of $R$, then $\widetilde{V}(S) \cup \widetilde{V}(S^{'})=\widetilde{V}(\{R-(P:M)|\ (P:M) \cap A = \emptyset \ \text{and } (P:M) \cap A^{'} = \emptyset \ \text{for some} \ A \in S\text{ and } A^{'} \in S^{'}\})$.
\end{enumerate}
\end{theorem}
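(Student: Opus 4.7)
Parts (i)--(iii) are largely unpacking the definition, whereas part (iv) is where the real content sits. For (i), I would note that if $P\in\widetilde{V}(\{1\})$ then $1\in (P:M)$, forcing $M\subseteq P$ and contradicting properness of the prime submodule $P$. For (ii), when $I=\emptyset$ the quantifier ``for all $S_i\in S$'' is vacuous, so every $P\in\operatorname{Spec}(M)$ belongs to $\widetilde{V}(S)$. For (iii), I would simply chase the definitions: $P\in\bigcap_{j\in J}\widetilde{V}(S_j)$ iff for each $j\in J$ and each $A\in S_j$ the intersection $A\cap (P:M)$ is nonempty, and this is exactly the condition for $P\in\widetilde{V}\!\left(\bigcup_{j\in J}S_j\right)$.

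For (iv), set $T:=\{R-(P:M)\mid (P:M)\cap A=\emptyset\text{ and }(P:M)\cap A'=\emptyset\text{ for some }A\in S,\ A'\in S'\}$. Note first that each element $R-(P:M)$ is indeed multiplicatively closed, since $(P:M)$ is a prime ideal of $R$ whenever $P\in\operatorname{Spec}(M)$. I would prove the equality by double inclusion.

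For $(\subseteq)$, suppose without loss of generality that $Q\in\widetilde{V}(S)$. Take any $R-(P:M)\in T$ with witnesses $A\in S$, $A'\in S'$ satisfying $(P:M)\cap A=\emptyset=(P:M)\cap A'$. Since $Q\in\widetilde{V}(S)$, there exists $a\in A\cap(Q:M)$; but $a\in A$ forces $a\notin (P:M)$, so $a\in(Q:M)\cap(R-(P:M))$, proving $Q\in\widetilde{V}(T)$. For $(\supseteq)$, I would argue contrapositively: if $Q\notin\widetilde{V}(S)\cup\widetilde{V}(S')$, choose $A\in S$, $A'\in S'$ with $A\cap (Q:M)=\emptyset=A'\cap (Q:M)$. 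These witnesses show $R-(Q:M)$ itself lies in $T$, and $(R-(Q:M))\cap (Q:M)=\emptyset$ trivially, hence $Q\notin\widetilde{V}(T)$.

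The main subtlety I expect is in part (iv): one must be careful about the edge case in which no prime submodule $P$ satisfies the displayed condition, so that $T=\emptyset$ and $\widetilde{V}(T)=\operatorname{Spec}(M)$ by (ii). The above argument handles this uniformly, since vacuity of the defining condition on $T$ corresponds exactly to $\widetilde{V}(S)\cup\widetilde{V}(S')=\operatorname{Spec}(M)$, as every $Q$ fails the assumption used in the $(\supseteq)$ direction. A secondary caution is purely notational: interpreting the quantifier ``for some $A\in S$ and $A'\in S'$'' as a single existential over a pair, and keeping the roles of $P$ (indexing $T$) and $Q$ (the point being tested) clearly separated throughout the element chase.
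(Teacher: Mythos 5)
Your proposal is correct and follows essentially the same route as the paper's proof: (i)--(iii) by unpacking definitions, and (iv) by double inclusion, using the point $Q$ itself to produce the member $R-(Q:M)$ of $T$ in one direction and the disjointness $(P:M)\cap A=\emptyset$ to extract an element of $(Q:M)\cap(R-(P:M))$ in the other. Your added observations (that each $R-(P:M)$ is multiplicatively closed since $(P:M)$ is prime, and the $T=\emptyset$ edge case) are sound refinements of the same argument, not a different method.
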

\begin{proof}
The first two statements are straightforward.
\begin{enumerate}
        \item[(iii)] Let $P \in \widetilde{V}(\bigcup_{j \in J}S_j)$ then $S \cap (P:M) \neq \emptyset$ for all $S \in \bigcup_{j \in J} S_j$. It follows that $S_{j,i} \cap (P:M) \neq  \emptyset$ for each $S_{j,i}\in S_j$. This implies that $P \in \widetilde{V}(S_j)$ and so $P \in \bigcap_{j \in J}\widetilde{V}(S_j)$ for all $j \in J$. On the other hand, if $P \in \bigcap_{j \in J}\widetilde{V}(S_j)$ then $S_{j,i} \cap (P:M) \neq \emptyset$ for all $S_{j,i} \in S_j$. Hence we obtain that
        $P \in \widetilde{V}(\bigcup_{j \in J}S_j)$ and so $\bigcap_{j \in J}\widetilde{V}(S_j) \subseteq \widetilde{V}(\bigcup_{j \in J}S_j)$. 
        \item[(iv)] Let $T:=\{R-(P_i:M)|P_i \in \operatorname{Spec}(M) \ \text{such that} \ (P_i:M) \cap A= \emptyset \ \text{and} \ (P_i:M) \cap A^{'}= \emptyset \ \text{for some} \ A \in S \ \text{and} \ A^{'} \in S^{'} \}$, and take an element $N \in \widetilde{V}(T)$. Now, we will show that $N\in\widetilde{V}(S)\cup\widetilde{V}(S')$. Suppose that $N\notin\widetilde{V}(S)\cup\widetilde{V}(S')$. Then we have $N\notin\widetilde{V}(S)$ and $N\notin\widetilde{V}(S')$. Then there exists $A\in S$ and $A'\in S'$ such that $(N:M)\cap A=\emptyset$ and $(N:M)\cap A'=\emptyset$. Now, put $T_i=R-(N:M)$. By the definition of $T$, note that $R-(N:M)=T_i\in T$ and also $(N:M)\notin\widetilde{V}(T_i)$ which implies that $N\notin\widetilde{V}(T)$. This is a contradiction. Thus, we conclude that $\widetilde{V}(T)\subseteq\widetilde{V}(S)\cup\widetilde{V}(S')$. For the other containment, choose $N\in\widetilde{V}(S)\cup\widetilde{V}(S')$. Without loss of generality, we may assume that $N\in\widetilde{V}(S)$. Then we have $(N:M)\cap A\neq\emptyset$ for all $A\in S$. Choose a prime submodule $P$ of $M$ such that $(P:M)\cap A=\emptyset$ and $(P:M)\cap A'=\emptyset$ for some $A\in S$ and $A'\in S'$. Now, we will show that $N\in\widetilde{V}(R-(P:M))$, that is, $(N:M)\cap (R-(P:M))\neq\emptyset$. Suppose that $(N:M)\subseteq (P:M)$. Since $(P:M)\cap A=\emptyset$, we have $(N:M)\cap A=\emptyset$ which implies that $N\notin\widetilde{V}(S)$. This is a contradiction. Thus, we have $N\in\widetilde{V}(R-(P:M))$ for all $R-(P:M)\in T$ which implies that $N\in\widetilde{V}(T)$, that is, $\widetilde{V}(S)\cup\widetilde{V}(S')\subseteq\widetilde{V}(T)$. Hence, we conclude that $\widetilde{V}(S)\cup\widetilde{V}(S')=\widetilde{V}(T)$. 
\end{enumerate}    
\end{proof}
The preceding theorem indicates that $\widetilde{V}(S)$ where $S$ is a family of multiplicatively closed subsets of $R$ are closed under finite union and arbitrary intersection. For this reason, $\widetilde{V}(S)$ can be considered as closed sets that satisfy the axioms of a topology on $\operatorname{Spec}(M)$, which is called $\widetilde{\operatorname{Spec}(M)}$ topology of $R$-module $M$. We remark that $\widetilde{\operatorname{Spec}(M)}$ corresponds to the $\widetilde{\operatorname{Spec}(R)}$ topology in case of $R$-module $R=M$. Also, we note here that we choose module $M$ such that $\operatorname{Spec}(M)\neq \emptyset$ otherwise $\widetilde{\operatorname{Spec}(M)}$ will be an empty space.
\begin{theorem}\label{opset}
Let $M$ be an $R$-module. Then, $$B=\{\widetilde{D}(S)|\ S \ \text{is a multiplicatively closed subset of} \ R\}\text{ is a basis for $\widetilde{\operatorname{Spec}(M)}$.}$$   
\end{theorem}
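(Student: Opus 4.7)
The plan is to verify the two defining properties of a basis for the topology $\widetilde{\operatorname{Spec}(M)}$: first, that each element of $B$ is an open set, and second, that every open set can be expressed as a union of members of $B$.

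For the first property, I will observe that for any multiplicatively closed subset $S$ of $R$, the singleton family $\{S\}$ gives
\[
\widetilde{V}(\{S\}) = \{P \in \operatorname{Spec}(M) \mid (P:M) \cap S \neq \emptyset\},
\]
so $\widetilde{D}(S) = \operatorname{Spec}(M) \setminus \widetilde{V}(\{S\})$ is precisely the complement of a closed set in $\widetilde{\operatorname{Spec}(M)}$, hence open.

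For the second property, I will take an arbitrary open set $U$. By the definition of the topology established via Theorem \ref{closedsets}, we may write $U = \operatorname{Spec}(M) \setminus \widetilde{V}(\{S_i\}_{i \in I})$ for some family $\{S_i\}_{i \in I}$ of multiplicatively closed subsets of $R$. Unwinding the membership condition, a point $P$ lies in $U$ if and only if the clause ``$(P:M) \cap S_i \neq \emptyset$ for every $i \in I$'' fails, that is, there exists some $i_0 \in I$ with $(P:M) \cap S_{i_0} = \emptyset$. This is exactly the condition $P \in \widetilde{D}(S_{i_0})$, and so
\[
U = \bigcup_{i \in I} \widetilde{D}(S_i),
\]
exhibiting $U$ as a union of elements of $B$ and completing the verification.

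The argument is essentially a translation between the closed-set formulation of the topology and the notation for the basis, so no deep algebraic input is needed. If one prefers to verify the abstract basis axioms directly, one would note that $\widetilde{D}(\{1\}) = \operatorname{Spec}(M)$ settles the covering axiom (since each prime submodule $P$ is proper, whence $1 \notin (P:M)$), and for the intersection axiom one would take $T = \{s_1 s_2 \mid s_1 \in S_1,\ s_2 \in S_2\}$, which is multiplicatively closed, and check that $\widetilde{D}(T) = \widetilde{D}(S_1) \cap \widetilde{D}(S_2)$; the nontrivial inclusion uses the primality of $(P:M)$ to rule out $s_1 s_2 \in (P:M)$ when neither $s_i$ lies in $(P:M)$. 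This is the only place in the proof where primality would actively enter, and thus is the only potential obstacle worth flagging.
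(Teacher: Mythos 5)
Your proposal is correct and follows essentially the same route as the paper: each $\widetilde{D}(S)$ is open as the complement of $\widetilde{V}(\{S\})$, and an arbitrary open set, being the complement of some $\widetilde{V}(\{S_i\}_{i\in I})$, unwinds (via Theorem \ref{closedsets}(iii)) into $\bigcup_{i\in I}\widetilde{D}(S_i)$. Your optional remark on the abstract basis axioms (using $\widetilde{D}(\{1\})=\operatorname{Spec}(M)$ and $\widetilde{D}(S_1)\cap\widetilde{D}(S_2)=\widetilde{D}(S_1S_2)$ via primality of $(P:M)$) is also sound but not needed for the verification.
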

\begin{proof}
It is clear that $\widetilde{D}(S)$ is open in $\widetilde{\operatorname{Spec}(M)}$ for any multiplicatively closed subset $S$ of $R$ from Theorem \ref{closedsets}. Moreover, $$\operatorname{Spec}(M)-\widetilde{V}(S^{'})=\operatorname{Spec}(M)-\bigcap_{A \in S^{'}}\widetilde{V}(A)$$ \\ $$=\bigcup_{A \in S^{'}}(\operatorname{Spec}(M)-\widetilde{V}(A))=\bigcup_{A \in S^{'}}\widetilde{D}(A)$$ for any family $S^{'}$ of multiplicatively closed subsets of $R$. Hence, $B$ is a basis for $\widetilde{\operatorname{Spec}(M)}$. 
\end{proof}
Recall from \cite{Kap} that a multiplicatively closed subset $S$ of $R$ is said to be a \textit{saturated} if $xy \in S$ for some $x,y\in R$ implies that $x,y \in S$. It is clear that the intersection of any collection of saturated multiplicatively closed subsets is also a saturated multiplicatively closed set, and the intersection of all saturated multiplicatively closed subsets containing a multiplicatively closed subset $S$ of $R$, denoted by $\hat{S}$, is said to be a saturation of $S$. It is well known that $\hat{S}=\{r \in R : rs \in S \ \text{for some} \ s \in S\}$, and $S$ is a saturated multiplicatively closed subset of $R$ if and only if $S= \hat{S}$.
\theoremstyle{definition}
\newtheorem{proposition}{\textbf{Proposition}}
\begin{proposition}\label{basis2}
Let $M$ be an $R$-module. Then the following statements hold.
\begin{enumerate}
\item[i)] $\mathcal{B}=\{\widetilde{D}(S)|\ S \ \text{is a saturated multiplicatively closed subset of} \ R\}$ is a basis for the topology $\widetilde{\operatorname{Spec}(M)}$.
\item [ii)] $\mathcal{B}'=\{\widetilde{D}(R-(P:M)): P\in \operatorname{Spec}(M)\}$ is a basis for the topology $\widetilde{\operatorname{Spec}(M)}$.
\end{enumerate}
\end{proposition}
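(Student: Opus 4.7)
The plan is to prove both parts by reducing to Theorem \ref{opset}, which already supplies the basis consisting of all $\widetilde{D}(S)$ with $S$ multiplicatively closed. The key standing observation is that for a prime submodule $P$, the ideal $(P:M)$ is prime, so $R-(P:M)$ is automatically a saturated multiplicatively closed subset of $R$.

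For part (i), I would first establish the identity $\widetilde{D}(S)=\widetilde{D}(\hat{S})$ for every multiplicatively closed subset $S$ of $R$. The inclusion $\widetilde{D}(\hat{S})\subseteq \widetilde{D}(S)$ is immediate from $S\subseteq \hat{S}$. For the reverse inclusion I would argue by contrapositive: if $r\in (P:M)\cap \hat{S}$, then by the characterization $\hat{S}=\{r\in R:rs\in S\text{ for some }s\in S\}$ there is $s\in S$ with $rs\in S$; since $(P:M)$ is an ideal, $rs\in (P:M)\cap S$, contradicting $(P:M)\cap S=\emptyset$. Once this identity is in hand, every basic open supplied by Theorem \ref{opset} already lies in $\mathcal{B}$, so $\mathcal{B}$ is a basis.

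For part (ii), I would use part (i) to restrict attention to basic opens $\widetilde{D}(S)$ with $S$ saturated, and then show the decomposition
\[
\widetilde{D}(S)=\bigcup_{P\in\widetilde{D}(S)}\widetilde{D}\bigl(R-(P:M)\bigr).
\]
The inclusion $\subseteq$ is trivial because $P\in\widetilde{D}(R-(P:M))$ for every $P$. For $\supseteq$, if $Q\in \widetilde{D}(R-(P:M))$ with $P\in\widetilde{D}(S)$, then $(Q:M)\cap (R-(P:M))=\emptyset$ forces $(Q:M)\subseteq (P:M)$; combined with $(P:M)\cap S=\emptyset$, we obtain $(Q:M)\cap S=\emptyset$, i.e., $Q\in\widetilde{D}(S)$. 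Since each $R-(P:M)$ is saturated and multiplicatively closed, this exhibits every element of the basis from (i) as a union of elements of $\mathcal{B}'$, completing the argument.

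The main obstacle is the reverse inclusion in part (i): one must use that $(P:M)$ is not merely multiplicatively closed but an ideal absorbing all of $R$, so that $r\in (P:M)$ together with $s\in S$ produces $rs\in (P:M)\cap S$. Part (ii) is then essentially an unpacking of definitions, relying on the fact that the containment $(Q:M)\subseteq (P:M)$ is exactly equivalent to $(Q:M)\cap (R-(P:M))=\emptyset$.
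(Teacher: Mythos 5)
Your proposal is correct and follows essentially the same route as the paper: part (i) rests on the identity $\widetilde{D}(S)=\widetilde{D}(\hat{S})$ (which you prove explicitly, using that $(P:M)$ is an ideal, where the paper only remarks it is ``easy to see''), and part (ii) uses exactly the paper's key observation that $Q\in\widetilde{D}(R-(P:M))$ forces $(Q:M)\subseteq (P:M)$, hence $\widetilde{D}(R-(P:M))\subseteq\widetilde{D}(S)$ whenever $P\in\widetilde{D}(S)$. Your union decomposition is just a repackaging of the paper's pointwise argument, so the two proofs coincide in substance.
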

\begin{proof}
\begin{enumerate}
\item[i)] Let $S$ be a saturated multiplicatively closed set and consider $\widetilde{D}(S)\in \widetilde{\operatorname{Spec}(M)}$. Then by considering previous theorem, we have $\widetilde{D}(S)\in B$ since $S$ is a multiplicatively closed set. Let $S$ be a multiplicatively closed set and consider $\widetilde{D}(S)\in B$. It is easy to see that $\widetilde{D}(S)=\widetilde{D}(\hat{S})$ and $\hat{S}$ is always a saturated multiplicatively closed set. This gives $\widetilde{D}(S)=\widetilde{D}(\hat{S})\in\mathcal{B}$ which implies that $B=\mathcal{B}$, and the claim follows.
\item[ii)] It is clear that $\mathcal{B}'\subseteq\mathcal{B}$ since $R-(P:M)$ is saturated multiplicatively closed set for every $P\in \operatorname{Spec}(M)$, and thus $\mathcal{B}'$ consists of open sets in $\widetilde{\operatorname{Spec}(M)}$. Let $O$ be an open set in $\widetilde{\operatorname{Spec}(M)}$. Then by Theorem \ref{opset}, we have $O=\bigcup_{S_{i} \in S}\widetilde{D}(S_{i})$ for some family of multiplicatively closed set $S$ of $R$. Let $P\in O$ then there exists $S_{i}\in S$ such that $P\in\widetilde{D}(S_{i})$, that is, $(P:M)\cap S_{i}=\emptyset$. Now, take $Q\in\widetilde{D}(R-(P:M))$. This gives $(Q:M)\subseteq (P:M)$ and so $Q\in\widetilde{D}(S_{i})$. Thus we conclude that $P\in\widetilde{D}(R-(P:M))\subseteq\widetilde{D}(S_{i})\subseteq O$. Hence, $\mathcal{B}'$ is a basis for $\widetilde{\operatorname{Spec}(M)}$.
\end{enumerate}
\end{proof}
\theoremstyle{definition}
\newtheorem{example}{\textbf{Example}}
\begin{example}\label{examp}
    	Consider the $R=\mathbb{Z}_6$-module $M=\mathbb{Z}_6$. One may observe that $X=\operatorname{Spec}(\mathbb{Z}_6)=\{(\overline{2}),(\overline{3})\}$ together with $((\overline{2}):\mathbb{Z}_6)=(\overline{2})$ and $((\overline{3}):\mathbb{Z}_6)=(\overline{3})$. This implies that $\widetilde{D}(\mathbb Z_6-(\bar 2))=\{(\bar 2)\}$ and $\widetilde{D}(\mathbb Z_6-(\bar 3))=\{(\bar 3)\}$. As a result,  $\widetilde{\operatorname{Spec}(\mathbb Z_6)}=\{\emptyset,\{(\overline{2})\},\{(\overline{3})\}, \operatorname{Spec}(\mathbb{Z}_6)\}$ is a discrete topology by Proposition \ref{basis2} ii).
\end{example}
Recall that a topological space $(X,\tau)$ is an Alexandrov space if $\bigcap_{O \in \tau} O \in \tau$ for any $O \subseteq \tau$, or equivalently, every point $x\in X$ has a smallest neighborhood \cite{Arena}.
\begin{lemma}\label{neigh}
 Let $M$ be an $R$-module. Then, $\widetilde{\operatorname{Spec}(M)}$ is an Alexandrov space. Moreover, $\widetilde{D}(R-(P:M))$ is the smallest neighborhood for each $P \in \widetilde{\operatorname{Spec}(M)}$ .   
\end{lemma}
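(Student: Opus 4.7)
The plan is to prove the second assertion first (the explicit smallest neighborhood description), and then deduce the Alexandrov property as a formal consequence.

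First I would observe the trivial but crucial fact that $P\in\widetilde{D}(R-(P:M))$, since $(P:M)\cap(R-(P:M))=\emptyset$ by definition. So $\widetilde{D}(R-(P:M))$ is at least a neighborhood of $P$. To show it is the smallest one, I would take an arbitrary open set $O$ containing $P$ and invoke Proposition \ref{basis2}(ii): there exists some $Q\in\operatorname{Spec}(M)$ with $P\in\widetilde{D}(R-(Q:M))\subseteq O$. Unpacking membership gives $(P:M)\cap(R-(Q:M))=\emptyset$, which is equivalent to $(P:M)\subseteq(Q:M)$. The key step is then to verify the containment $\widetilde{D}(R-(P:M))\subseteq\widetilde{D}(R-(Q:M))$: indeed, for any $N$ in the left-hand side we have $(N:M)\subseteq(P:M)\subseteq(Q:M)$, giving $N\in\widetilde{D}(R-(Q:M))\subseteq O$. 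This yields $\widetilde{D}(R-(P:M))\subseteq O$ for every open neighborhood $O$ of $P$, establishing minimality.

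Having the smallest neighborhood for each point, the Alexandrov property follows by a standard topological argument: given any family $\{O_\lambda\}_{\lambda\in\Lambda}$ of open sets and any $P\in\bigcap_{\lambda\in\Lambda}O_\lambda$, the smallest neighborhood $\widetilde{D}(R-(P:M))$ must sit inside each $O_\lambda$, hence inside $\bigcap_{\lambda\in\Lambda}O_\lambda$. Thus the intersection is a neighborhood of each of its points, so it is open.

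The only subtle step is the equivalence $(P:M)\cap(R-(Q:M))=\emptyset \Longleftrightarrow (P:M)\subseteq(Q:M)$, and its use to cascade inclusions; everything else is bookkeeping. I do not anticipate any real obstacle, since the basis $\mathcal{B}'$ from Proposition \ref{basis2}(ii) is precisely tailored to produce such smallest neighborhoods.
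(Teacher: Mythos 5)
Your proof is correct and follows essentially the same route as the paper: the core step in both is that any basic open set containing $P$ corresponds to a multiplicatively closed set disjoint from $(P:M)$, which forces it to contain $\widetilde{D}(R-(P:M))$. The only cosmetic difference is that you work with the basis $\mathcal{B}'$ of Proposition \ref{basis2}(ii) and deduce the Alexandrov property from the smallest-neighborhood statement, whereas the paper uses the general basis $B$ of Theorem \ref{opset} and verifies the arbitrary-intersection condition directly; there is no circularity in your use of Proposition \ref{basis2}(ii), since it is proved before the lemma.
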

\begin{proof}
Let $\{U_i\}_{i \in I}$ be a family of open sets in $\widetilde{\operatorname{Spec}(M)}$ and $P \in \bigcap_{i \in I}U_i$. Then, there is a $\widetilde{D}(S_i) \in B$ such that $P \in \widetilde{D}(S_i) \subseteq U_i$ for all $i \in I $. This implies that $S_i \subseteq R-(P:M)$ and so $\widetilde{D}(R-(P:M)) \subseteq \widetilde{D}(S_i)$ for all $i \in I$. Hence, the desired result is achieved.
\end{proof}
A point $x$ of a topological space $X$ is called an \textit{isolated point} if $\{x\}$ is an open set in $X$ \cite{munkres}. Recall that a prime submodule $P$ of $M$ is a minimal prime submodule if $P$ is the minimal element of $\operatorname{Spec}(M)$ with respect to inclusion \cite{Sharp}. 
\begin{lemma}\label{isolated}
Isolated points of $\widetilde{\operatorname{Spec}(M)}$ are minimal prime submodules. In particular, if $M$ is a multiplication module, then every minimal prime submodule is an isolated point of $\widetilde{\operatorname{Spec}(M)}$.
\end{lemma}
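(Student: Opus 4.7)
The plan is to exploit Lemma \ref{neigh}, which identifies $\widetilde{D}(R-(P:M))$ as the smallest open neighborhood of $P$ in $\widetilde{\operatorname{Spec}(M)}$. Any characterization of ``isolated'' will then reduce to understanding when this particular basic open set equals $\{P\}$.

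For the first assertion, suppose $P \in \widetilde{\operatorname{Spec}(M)}$ is isolated, i.e.\ $\{P\}$ is open. Since $\widetilde{D}(R-(P:M))$ is the smallest neighborhood of $P$ and $P \in \{P\}$, we must have $\widetilde{D}(R-(P:M)) \subseteq \{P\}$; combined with $P \in \widetilde{D}(R-(P:M))$ this forces $\widetilde{D}(R-(P:M)) = \{P\}$. To see $P$ is minimal, let $Q \in \operatorname{Spec}(M)$ with $Q \subseteq P$. Then $(Q:M) \subseteq (P:M)$, so $(Q:M) \cap (R-(P:M)) = \emptyset$, meaning $Q \in \widetilde{D}(R-(P:M)) = \{P\}$. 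Hence $Q = P$, proving minimality.

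For the second assertion, assume $M$ is a multiplication module and let $P$ be a minimal prime submodule. I will show $\widetilde{D}(R-(P:M)) = \{P\}$; openness of $\{P\}$ follows from Theorem \ref{opset}. The inclusion $P \in \widetilde{D}(R-(P:M))$ is automatic. Conversely, pick $Q \in \widetilde{D}(R-(P:M))$, so that $(Q:M) \cap (R-(P:M)) = \emptyset$, i.e.\ $(Q:M) \subseteq (P:M)$. Here the multiplication hypothesis enters crucially: every submodule $N$ satisfies $N = (N:M)M$, so $Q = (Q:M)M \subseteq (P:M)M = P$. Minimality of $P$ then forces $Q = P$.

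The main obstacle, and indeed the only nontrivial ingredient, is the reverse implication $(Q:M) \subseteq (P:M) \Rightarrow Q \subseteq P$ needed in the ``in particular'' clause; without the multiplication hypothesis this step fails in general, which is precisely why that extra assumption is required. Everything else is a direct unpacking of the definitions of $\widetilde{D}(-)$ and of the smallest neighborhood provided by Lemma \ref{neigh}.
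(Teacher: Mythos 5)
Your proposal is correct and follows the same route as the paper: both directions are reduced to the identity $\widetilde{D}(R-(P:M))=\{P\}$ via the smallest-neighborhood property from Lemma \ref{neigh}, with the multiplication hypothesis used exactly where you note, to convert $(Q:M)\subseteq (P:M)$ into $Q=(Q:M)M\subseteq (P:M)M=P$. No gaps.
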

\begin{proof}
Let $P\in \operatorname{Spec}(M)$ be an isolated point. Then $\widetilde{D}(R-(P:M))=\{P\}$ by Lemma \ref{neigh}. Moreover, if there is a prime submodule $Q$ of $M$ such that $Q\subseteq P$, then $(Q:M)\subseteq(P:M)$ which implies that $Q\in \widetilde{D}(R-(P:M))=\{P\}$. Thus we have $P=Q$, that is, $P$ is a minimal prime submodule of $M$. Now, suppose that $M$ is a multiplication module and $P$ is a minimal prime submodule of $M$. Let $Q\in\widetilde{D}(R-(P:M))$. Then we have $(Q:M)\subseteq (P:M)$. Since $M$ is multiplication, we conclude that $Q=(Q:M)M\subseteq P=(P:M)M$. As $P$ is a minimal prime submodule, we get $P=Q$ which implies that $\widetilde{D}(R-(P:M))=\{P\}$ is an open set. Hence, $P$ is an isolated point. 
\end{proof}
An $R$-module $M$ is said to be zero dimensional if every prime submodule is maximal, or equivalently, every prime submodule is minimal. Now, we characterize zero dimensional multiplication modules in terms of isolated points of $\widetilde{\operatorname{Spec}(M)}$.

\begin{corollary}\label{isolated2}
Let $M$ be a multiplication module. Then $M$ is a zero dimensional module if and only if every point of $\widetilde{\operatorname{Spec}(M)}$ is an isolated point, that is, $\widetilde{\operatorname{Spec}(M)}$ is a discrete space.      
\end{corollary}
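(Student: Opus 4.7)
The plan is to deduce this corollary as a direct consequence of Lemma \ref{isolated} combined with the characterization of zero-dimensional modules (every prime submodule is minimal), handling each implication separately.

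For the forward implication, I would assume $M$ is zero-dimensional and pick an arbitrary $P \in \widetilde{\operatorname{Spec}(M)}$. By the definition of zero-dimensional, $P$ is a minimal prime submodule of $M$. Since $M$ is a multiplication module by hypothesis, the second assertion of Lemma \ref{isolated} applies and guarantees that $P$ is an isolated point of $\widetilde{\operatorname{Spec}(M)}$. As $P$ was arbitrary, every point is isolated, which means every singleton is open. This forces $\widetilde{\operatorname{Spec}(M)}$ to be discrete.

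For the reverse implication, I would assume $\widetilde{\operatorname{Spec}(M)}$ is a discrete space, so every point is an isolated point. Taking any $P \in \operatorname{Spec}(M)$, the first assertion of Lemma \ref{isolated} (which requires no multiplication hypothesis) tells us that $P$ must be a minimal prime submodule of $M$. Since every prime submodule is minimal, $M$ is zero-dimensional by definition.

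There is essentially no obstacle here; the content of the result lies entirely in Lemma \ref{isolated}, which already packages both directions once the multiplication hypothesis is assumed. The only thing to verify carefully is that the forward direction genuinely requires the multiplication hypothesis (it is used to pass from minimal prime submodule to isolated point), while the reverse direction does not. Since the hypothesis is in force throughout the corollary, this poses no issue, and the proof reduces to one sentence per direction citing the appropriate half of Lemma \ref{isolated}.
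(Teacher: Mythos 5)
Your proposal is correct and follows exactly the paper's argument: the forward direction invokes the second assertion of Lemma \ref{isolated} (minimal prime plus multiplication hypothesis gives isolated point), and the reverse direction invokes the first assertion (isolated point gives minimal prime), using the definition of zero dimensional as every prime submodule being minimal. Your remark about where the multiplication hypothesis is actually needed is accurate and consistent with the paper.
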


\begin{proof}
Suppose that $P\in \operatorname{Spec}(M)$ and $M$ is a zero dimensional module. Then, $P$ is an isolated point by the previous lemma. For the converse, assume that $P\in \operatorname{Spec}(M)$ is an isolated point of $\widetilde{\operatorname{Spec}(M)}$. Then, $P$ is a minimal prime submodule of $M$ again by the previous lemma. Thus, $M$ is a zero dimensional module. 
\end{proof}

Recall that a topological space $(X,\tau)$ is called a \textit{nested space} if every open set of $X$ is comparable with respect to inclusion \cite{Ricmond}. Moreover, recall from \cite[Lemma 1]{YiKo} that $X$ is called nested if and only if basis elements of $(X,\tau)$ are comparable.
\begin{theorem}\label{nested}
$\widetilde{\operatorname{Spec}(M)}$ is a nested space if and only if every element of the set $\{(P:M)\ |\ P\in \operatorname{\operatorname{Spec}(M)} \}$ is totally ordered by inclusion.
\end{theorem}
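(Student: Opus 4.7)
The plan is to reduce the question to the basis $\mathcal{B}'=\{\widetilde{D}(R-(P:M)) : P\in\operatorname{Spec}(M)\}$ supplied by Proposition \ref{basis2}(ii) and then invoke the criterion cited from \cite{YiKo}: $\widetilde{\operatorname{Spec}(M)}$ is nested iff the elements of a basis are totally ordered by inclusion. Once we are on this basis, the question becomes: when are the sets $\widetilde{D}(R-(P:M))$ pairwise comparable?

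The key observation I would establish first is the translation lemma
\[
(P:M)\subseteq (Q:M)\iff \widetilde{D}(R-(P:M))\subseteq\widetilde{D}(R-(Q:M)).
\]
The forward direction is straightforward: if $(P:M)\subseteq(Q:M)$, then $R-(Q:M)\subseteq R-(P:M)$, and for any $N\in\widetilde{D}(R-(P:M))$ we have $(N:M)\cap(R-(P:M))=\emptyset$, hence a fortiori $(N:M)\cap(R-(Q:M))=\emptyset$. The reverse direction uses the fact that $P$ itself lies in its own basic open set, since $(P:M)\cap(R-(P:M))=\emptyset$; so if $\widetilde{D}(R-(P:M))\subseteq\widetilde{D}(R-(Q:M))$, then $P\in\widetilde{D}(R-(Q:M))$, which forces $(P:M)\cap(R-(Q:M))=\emptyset$, i.e.\ $(P:M)\subseteq (Q:M)$.

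With this equivalence in hand, both directions of the theorem fall out immediately. If $\{(P:M):P\in\operatorname{Spec}(M)\}$ is totally ordered, then for any two basis elements $\widetilde{D}(R-(P:M))$ and $\widetilde{D}(R-(Q:M))$ one of the ideal containments $(P:M)\subseteq(Q:M)$ or $(Q:M)\subseteq(P:M)$ holds, and the lemma converts this into comparability of the two basic open sets; by the \cite{YiKo} characterization, $\widetilde{\operatorname{Spec}(M)}$ is nested. Conversely, if $\widetilde{\operatorname{Spec}(M)}$ is nested, then the basis elements are pairwise comparable, and reversing the equivalence shows the collection of ideals $(P:M)$ is totally ordered.

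There is no serious obstacle; the only mildly delicate point is being careful about the reversal of inclusion when passing from a multiplicatively closed set $S$ to $\widetilde{D}(S)$ (larger $S$ gives smaller $\widetilde{D}(S)$), which is exactly why taking complements $R-(P:M)$ makes the correspondence between ideal containment and open-set containment covariant rather than contravariant.
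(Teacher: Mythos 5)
Your proposal is correct and follows essentially the same route as the paper: both reduce to the basis $\mathcal{B}'=\{\widetilde{D}(R-(P:M))\}$ of Proposition \ref{basis2}(ii) and the nestedness criterion of \cite[Lemma 1]{YiKo}, using the equivalence $(P:M)\subseteq (Q:M)\iff \widetilde{D}(R-(P:M))\subseteq\widetilde{D}(R-(Q:M))$ (the paper argues the forward implication directly and the converse by contradiction via $P\in\widetilde{D}(R-(P:M))$, which is exactly your translation lemma made explicit).
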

\begin{proof}
$(\Leftarrow):$ One may have $(P:M)\subseteq (Q:M)$ or $(Q:M)\subseteq (P:M)$ for some $P,Q \in \operatorname{\operatorname{Spec}(M)}$. This implies that $\widetilde{D}(R-(P:M))\subseteq \widetilde{D}(R-(Q:M))$ or $\widetilde{D}(R-(Q:M))\subseteq \widetilde{D}(R-(P:M))$.
Hence, $\widetilde{\operatorname{Spec}(M)}$ is nested by Proposition \ref{basis2} ii) and \cite[Lemma 1]{YiKo}.

$(\Rightarrow):$ Let $(P:M)\not \subseteq(Q:M)$ and $(Q:M)\not \subseteq(P:M)$ for some $P,Q \in \operatorname{Spec}(M)$. It follows that $P\not\in \widetilde{D}(R-(Q:M))$ and $Q\not\in \widetilde{D}(R-(P:M))$. However, since $\widetilde{\operatorname{Spec}(M)}$ is nested, one may have $P\in \widetilde{D}(R-(P:M))\subseteq \widetilde{D}(R-(Q:M))$ which leads to a contradiction.
\end{proof}
\begin{lemma}\label{contmap}
Let $M$ and $M^{'}$ be $R$-modules, and $f: M \rightarrow M^{'}$ be a module epimorphism. Then, $f$ induces a continuous map $\widetilde{\theta}:\widetilde{\operatorname{\operatorname{Spec}(M}')} \rightarrow \widetilde{\operatorname{Spec}(M)}$ which is defined by $\widetilde{\theta}(P')=f^{-1}(P')$ for every prime submodule $P'$ of $M'$.
\end{lemma}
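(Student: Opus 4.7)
The plan is to verify three things in sequence: that $\widetilde{\theta}$ is well-defined, that colon ideals are preserved under $\widetilde{\theta}$, and that preimages of basic open sets are open.

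First, I would show that $f^{-1}(P')$ is a prime submodule of $M$ whenever $P'\in\operatorname{Spec}(M')$. Properness follows from surjectivity: if $f^{-1}(P')=M$, then $P'=f(M)=M'$, contradicting the properness of $P'$. For primeness, suppose $rm\in f^{-1}(P')$; then $rf(m)\in P'$, and the primeness of $P'$ gives either $f(m)\in P'$ (hence $m\in f^{-1}(P')$) or $r\in (P':M')$. I would then translate the latter into $r\in (f^{-1}(P'):M)$ using the identity established in the next step.

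Second, I would prove the identity $(f^{-1}(P'):M)=(P':M')$. The inclusion $(P':M')\subseteq(f^{-1}(P'):M)$ follows because $rM'\subseteq P'$ forces $rM\subseteq f^{-1}(rM')\subseteq f^{-1}(P')$. For the reverse inclusion I would invoke surjectivity: from $rM\subseteq f^{-1}(P')$, applying $f$ yields $rM'=rf(M)=f(rM)\subseteq P'$.

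Finally, continuity reduces to checking on the basis from Theorem \ref{opset}. For any multiplicatively closed subset $S$ of $R$, the preceding identity gives
\[
\widetilde{\theta}^{-1}(\widetilde{D}(S))=\{P'\in\operatorname{Spec}(M'):(P':M')\cap S=\emptyset\},
\]
which is exactly the basic open set $\widetilde{D}(S)\subseteq\widetilde{\operatorname{Spec}(M')}$. Since preimages of basis elements are open, $\widetilde{\theta}$ is continuous. The main obstacle is the identity in the second step: both inclusions genuinely require surjectivity of $f$, and this is the one place where the epimorphism hypothesis (beyond ensuring properness of $f^{-1}(P')$) is truly indispensable.
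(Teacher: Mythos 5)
Your proposal is correct and takes essentially the same route as the paper: continuity is verified on basic open sets through the identity $(f^{-1}(P'):M)=(P':M')$, giving $\widetilde{\theta}^{-1}(\widetilde{D}_{M}(S))=\widetilde{D}_{M'}(S)$, with the added (and welcome) check that $f^{-1}(P')$ is indeed a prime submodule, which the paper leaves implicit. One small correction to your closing remark: only the inclusion $(f^{-1}(P'):M)\subseteq(P':M')$ uses surjectivity; the inclusion $(P':M')\subseteq(f^{-1}(P'):M)$ holds for an arbitrary module homomorphism.
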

\begin{proof}
Consider the following map $\widetilde{\theta}:\widetilde{\operatorname{\operatorname{Spec}(M}')} \rightarrow \widetilde{\operatorname{Spec}(M)}, \ \widetilde{\theta}(P^{'})=f^{-1}(P^{'})$ for any prime submodule $P^{'}$ of $M^{'}$ and $S$ be a multiplicatively closed subset of $R$. Now, we will show that $\widetilde{\theta}^{-1}(\widetilde{D}_{M}(S))=\widetilde{D}_{M^{'}}(S)$. Let $P^{'} \in \widetilde{\theta}^{-1}(\widetilde{D}_M(S))$. Then $\widetilde{\theta}(P^{'})=f^{-1}(P^{'})\in\widetilde{D}_{M}(S)$. This implies that $(f^{-1}(P^{'}):M) \cap S= \emptyset$. Since $f$ is surjective, note that $(f^{-1}(P^{'}):M)=(P^{'}:M)$. Thus, we conclude that $(P^{'}:M) \cap S= \emptyset$, that is, $P^{'} \in \widetilde{D}_{M^{'}(S)}$. Then we have $\widetilde{\theta}^{-1}(\widetilde{D}_{M}(S))\subseteq\widetilde{D}_{M^{'}}(S)$. In a similar manner, one can obtain the reverse inclusion. Thus, we have the equality $\widetilde{\theta}^{-1}(\widetilde{D}_{M}(S))=\widetilde{D}_{M^{'}}(S)$. Hence, the inverse image of any open set in $\widetilde{\operatorname{\operatorname{Spec}(M}')}$ is again open in $\widetilde{\operatorname{\operatorname{Spec}(M)}}$.
\end{proof}

\begin{theorem}\label{homeo}
Let $M$ be a finitely generated injective module over a Noetherian ring $R$ and $S=\{f^n| \ n \geq 0\}$ a multiplicatively closed subset of $R$. If $\widetilde{D}(S)$ is considered as a subspace of $\widetilde{\operatorname{Spec}(M)}$, then $\widetilde{D}(S)$ is homeomorphic to $\widetilde{\operatorname{Spec}(\text{$S^{-1}M$}})$.  
\end{theorem}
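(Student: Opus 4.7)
The strategy is to exhibit explicit mutually inverse continuous maps between the two spaces via the canonical localization map $\phi\colon M\to S^{-1}M$. I plan to define $\widetilde{\theta}\colon\widetilde{\operatorname{Spec}(S^{-1}M)}\to\widetilde{D}(S)$ by $\widetilde{\theta}(P')=\phi^{-1}(P')$, together with a candidate inverse $\widetilde{\psi}\colon\widetilde{D}(S)\to\widetilde{\operatorname{Spec}(S^{-1}M)}$ given by $\widetilde{\psi}(P)=S^{-1}P$, and then to verify in order: well-definedness, bijectivity, and bicontinuity.

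The first task is well-definedness. For $\widetilde{\theta}$, standard localization arguments show that $\phi^{-1}(P')$ is a proper prime submodule of $M$; moreover if some $s\in(\phi^{-1}(P'):M)\cap S$ existed, then $s$ would be invertible in $S^{-1}R$, forcing $P'=S^{-1}M$, a contradiction. For $\widetilde{\psi}$, the finitely generated hypothesis enters crucially: writing $M=\sum_{i=1}^{n}Rm_{i}$, if $S^{-1}P=S^{-1}M$ then each $\frac{m_{i}}{1}\in S^{-1}P$ yields some $s_{i}\in S$ with $s_{i}m_{i}\in P$, so the product $s_{1}\cdots s_{n}$ lies in $(P:M)\cap S$, contradicting $P\in\widetilde{D}(S)$. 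Primeness of $S^{-1}P$ in $S^{-1}M$ will follow from primeness of $(P:M)$ in $R$ together with the identity $(S^{-1}P:_{S^{-1}R}S^{-1}M)=S^{-1}(P:M)$, which again relies on $M$ being finitely generated.

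Bijectivity will then reduce to the two identities $P'=S^{-1}\phi^{-1}(P')$ and $\phi^{-1}(S^{-1}P)=P$; the first uses that $P'$ is an $S^{-1}R$-submodule, and the second uses primeness of $P$ together with $(P:M)\cap S=\emptyset$. For bicontinuity I would invoke the basis $\mathcal{B}'$ from Proposition \ref{basis2}(ii) for both topologies (intersected with $\widetilde{D}(S)$ on the target side). Using the colon identity above, I would verify, for any $Q\in\widetilde{D}(S)$ with $Q'=S^{-1}Q$, that the inclusion $(P':S^{-1}M)\subseteq(Q':S^{-1}M)$ in $S^{-1}R$ is equivalent to $(P:M)\subseteq(Q:M)$ in $R$ (contracting back to $R$ is faithful since both ideals miss $S$). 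Consequently $\widetilde{\theta}$ sends the basic open $\widetilde{D}_{S^{-1}M}\bigl(S^{-1}R-(Q':S^{-1}M)\bigr)$ bijectively onto $\widetilde{D}_{M}\bigl(R-(Q:M)\bigr)\cap\widetilde{D}(S)$, with $\widetilde{\psi}$ providing the inverse correspondence of basis elements.

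The main obstacle I anticipate is the careful bookkeeping for the colon-localization identity $(S^{-1}P:_{S^{-1}R}S^{-1}M)=S^{-1}(P:M)$ and translating the disjointness condition with $S$ faithfully between $R$ and $S^{-1}R$; these are exactly the places where finite generation is indispensable. The Noetherian and injectivity hypotheses appear to play a supporting rather than central role in the homeomorphism itself, presumably ensuring structural properties of $\operatorname{Spec}(M)$ and of $S^{-1}M$ (for instance that $S^{-1}M$ inherits the finiteness needed to apply Proposition \ref{basis2}(ii) on the localized side), so the core three-step plan should proceed uniformly once these technical identities are in hand.
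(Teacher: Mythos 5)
Your proposal is correct, but it takes a genuinely different route from the paper's. The paper's proof hinges on the fact that for an injective module over a Noetherian ring the localization map $\pi\colon M\to S^{-1}M$ is \emph{surjective} (citing Hartshorne, Lemma 3.3); this lets it invoke Lemma \ref{contmap} to get continuity of $\widetilde{\theta}\colon\widetilde{\operatorname{Spec}(S^{-1}M)}\to\widetilde{D}(S)$, quote the known localization correspondence for prime submodules to get bijectivity, and finish by showing $\widetilde{\theta}$ is open via the computation $\widetilde{\theta}(\widetilde{D}_{S^{-1}M}(T))=\widetilde{D}_{M}(S)\cap\widetilde{D}_{M}(\varphi^{-1}(T))$. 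You instead build the mutually inverse maps $P'\mapsto\phi^{-1}(P')$ and $P\mapsto S^{-1}P$ by hand and match the minimal basic neighborhoods $\widetilde{D}_{S^{-1}M}\bigl(S^{-1}R-(Q':S^{-1}M)\bigr)$ and $\widetilde{D}_{M}\bigl(R-(Q:M)\bigr)$ through the identity $(S^{-1}P:S^{-1}M)=S^{-1}(P:M)$; finite generation is the only hypothesis you use, and your closing suspicion is accurate: injectivity and Noetherianity enter the paper's argument solely to make $\pi$ surjective so that Lemma \ref{contmap} applies, and they play no role in your route (they are not needed to apply Proposition \ref{basis2}(ii) to the $S^{-1}R$-module $S^{-1}M$ either). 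What the paper's route buys is brevity, since continuity and bijectivity are imported from earlier results; what your route buys is self-containedness and more generality, and with a little extra care, using primeness of $P$ instead of a finite generating set to get properness of $S^{-1}P$ and the colon identity, even finite generation could be dispensed with. Two details worth recording when you write it up: for $P\in\widetilde{D}(S)$ the minimal neighborhood $\widetilde{D}_{M}(R-(P:M))$ is automatically contained in $\widetilde{D}(S)$, so these sets do form a basis of the subspace topology; and $\operatorname{Spec}(S^{-1}M)$ should be understood with respect to $S^{-1}R$ and its multiplicatively closed subsets, consistent with the paper's usage.
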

\begin{proof}
Let $\pi:M \rightarrow S^{-1}M$ and $\varphi:R \rightarrow S^{-1}R$ be natural module and ring homomorphisms, respectively. It is clear that $\pi$ is surjective by \cite[Lemma 3.3]{Rob}. Then, $\pi$ induces the bijective continuous map $\widetilde{\theta}:\widetilde{Spec}(S^{-1}M)\rightarrow\widetilde{D}(S), S^{-1}P \mapsto \pi^{-1}(S^{-1}P) $ by Lemma \ref{contmap} and \cite[Proposition 2.2-(i) and Proposition 2.17]{Sprime}. Now, it is enough to show that $\widetilde{\theta}$ is an open map. Let $T$ be a multiplicatively closed subset of $S^{-1}R$. Then,
{\small
\begin{align*}
  \widetilde{\theta}(\widetilde{D}_{S^{-1}M}(T))=\{\pi^{-1}(S^{-1}P)|S^{-1}P \in \widetilde{D}_{S^{-1}M}(T)\} \\
  =\{\pi^{-1}(S^{-1}P)|(\pi^{-1}(S^{-1}P):M) \cap S= \emptyset \ \text{and} \ (S^{-1}P:S^{-1}M)
   \cap T= \emptyset\} \\
   =\{\pi^{-1}(S^{-1}P)|(\pi^{-1}(S^{-1}P):M) \cap S= \emptyset \ \text{and} \ (\pi^{-1}(S^{-1}P):M)
   \cap \varphi^{-1}(T)= \emptyset\} \\
  = \widetilde{D}_{M}(S) \cap \widetilde{D}_{M}(\varphi^{-1}(T))
\end{align*}\par}
which is open in $\widetilde{D}(S)$.
\end{proof}
Let $R$ be a ring and $\Lambda \subseteq \operatorname{Spec}(R)$. We say that $\Lambda$ satisfies prime avoidance (PA for short) if $I \subseteq \bigcup_{p \in \Lambda}p$ for some ideal $I$ of $R$, then $I \subseteq p$ for some $p \in \Lambda$\cite{justin}.

\begin{theorem}\label{priavo}
 Let $M$ be an $R$-module, $\{P_i\}_{i \in I}, \ \{Q_j\}_{j \in J}$ be two collections of prime submodules of $M$ and $S=R-\bigcup_{i \in I}(P_i:M), \ T=R-\bigcup_{j \in J}(Q_j:M)$. Then
 \begin{enumerate}
     \item[(i)] If $\{(P_i:M)\}_{i \in I} \cup \{(Q_j:M)\}_{j \in J}$ satisfies $PA$, then $\widetilde{D}(S) \cup \widetilde{D}(T)=\widetilde{D}(S \cap T)$.
     \item[(ii)] Suppose that $\{(P_i:M)\}_{i \in I}$ is a family of incomparable primes satisfying $PA$ and $M$ is a finitely generated faithful module. Then $\widetilde{D}(S)=\{P \in \operatorname{Spec}(M)|(P:M)=(P_{i_0}:M) \ \text{for some} \  i_0 \in I\}$ if and only if $\{(P_i:M)\}_{i \in I} \subseteq \operatorname{Min}(R)$.
     \item[(iii)] If $\operatorname{Max}(R)=\{(P_i:M)\}_{i \in I}$, then $\widetilde{D}(S)=\widetilde{\operatorname{Spec}(M)}$. 
 \end{enumerate}
\end{theorem}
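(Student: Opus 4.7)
The plan is to reduce each of the three assertions to a statement about containment of $(P:M)$ in a union of prime ideals of $R$, and then invoke the appropriate hypothesis (prime avoidance, minimality, or maximality). The key translation, used throughout, is that a prime submodule $P$ lies in $\widetilde{D}\bigl(R-\bigcup_{\alpha}(P_\alpha:M)\bigr)$ precisely when $(P:M)\subseteq\bigcup_\alpha(P_\alpha:M)$, which follows directly from the definition of $\widetilde{D}$. I will also use the identity $S\cap T=R-\bigl(\bigcup_i(P_i:M)\cup\bigcup_j(Q_j:M)\bigr)$.

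For (i), the inclusion $\widetilde{D}(S)\cup\widetilde{D}(T)\subseteq\widetilde{D}(S\cap T)$ is immediate, since $S\cap T$ is contained in both $S$ and $T$. For the reverse inclusion, I would take $P\in\widetilde{D}(S\cap T)$, rewrite the condition as $(P:M)\subseteq\bigcup_i(P_i:M)\cup\bigcup_j(Q_j:M)$, and apply the PA hypothesis of the combined family to $(P:M)$. This yields either $(P:M)\subseteq(P_{i_0}:M)$ for some $i_0$, placing $P$ in $\widetilde{D}(S)$, or $(P:M)\subseteq(Q_{j_0}:M)$ for some $j_0$, placing $P$ in $\widetilde{D}(T)$. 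Part (iii) is the most direct of the three: every prime ideal of $R$ is contained in a maximal ideal, so for any $P\in\operatorname{Spec}(M)$ one has $(P:M)\subseteq(P_{i_0}:M)$ for some $i_0$, whence $(P:M)\cap S=\emptyset$ and $P\in\widetilde{D}(S)$.

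For (ii), I would first handle the backward implication. Given $P\in\widetilde{D}(S)$, PA applied to $(P:M)\subseteq\bigcup_i(P_i:M)$ yields $(P:M)\subseteq(P_{i_0}:M)$ for some $i_0$; since $(P:M)$ is a prime ideal sitting inside a minimal prime, equality $(P:M)=(P_{i_0}:M)$ is forced. The reverse containment of the claimed description is automatic, because $(P:M)=(P_{i_0}:M)\subseteq\bigcup_i(P_i:M)$ gives $(P:M)\cap S=\emptyset$. For the forward implication, I would argue by contrapositive: if some $(P_{i_0}:M)$ is not minimal, pick a prime $\mathfrak{q}\subsetneq(P_{i_0}:M)$ and, using the finitely generated faithful hypothesis on $M$, construct a prime submodule $P'$ with $(P':M)=\mathfrak{q}$. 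Then $P'\in\widetilde{D}(S)$ since $(P':M)=\mathfrak{q}\subsetneq(P_{i_0}:M)\subseteq\bigcup_i(P_i:M)$, yet $(P':M)$ cannot equal any $(P_j:M)$: equality with $(P_{i_0}:M)$ contradicts the strict inclusion, and equality with $(P_j:M)$ for $j\neq i_0$ would give $(P_j:M)\subsetneq(P_{i_0}:M)$, violating incomparability. This contradicts the assumed description of $\widetilde{D}(S)$.

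The main obstacle I foresee is the existence of a prime submodule $P'$ with $(P':M)=\mathfrak{q}$ in the forward direction of (ii); this is where the full strength of the finitely generated faithful hypothesis is used. My plan for this step is to invoke the determinant trick (Cayley--Hamilton for modules) to force $\mathfrak{q} M\subsetneq M$, then pass to the nonzero finite-dimensional $k(\mathfrak{q})$-vector space $M_{\mathfrak{q}}/\mathfrak{q} M_{\mathfrak{q}}$, choose a codimension-one subspace $W$, and define $P'$ as the preimage of $W$ under the composite $M\to M_{\mathfrak{q}}/\mathfrak{q} M_{\mathfrak{q}}$. A short verification, using that $M/P'$ embeds into the residue field $k(\mathfrak{q})$, will then confirm that $P'$ is a prime submodule with $(P':M)=\mathfrak{q}$, closing the argument.
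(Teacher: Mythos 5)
Your proposal is correct and follows essentially the same route as the paper's proof: the easy inclusion plus prime avoidance for (i), containment in a maximal ideal for (iii), and PA plus minimality for the backward direction of (ii). The only divergence is in the forward direction of (ii), where the paper simply cites McCasland--Moore (and Lu) for the existence of a prime submodule with colon ideal equal to a prescribed prime contained in $(P_{i_0}:M)$, whereas you re-derive that lemma yourself via localization, Nakayama, and a codimension-one subspace of $M_{\mathfrak{q}}/\mathfrak{q}M_{\mathfrak{q}}$ --- a valid, self-contained substitute, with the small caveat that the nonvanishing of $M_{\mathfrak{q}}/\mathfrak{q}M_{\mathfrak{q}}$ should be deduced from $M_{\mathfrak{q}}\neq 0$ (faithfulness plus finite generation) together with Nakayama over $R_{\mathfrak{q}}$, not merely from $\mathfrak{q}M\subsetneq M$.
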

\begin{proof}
  \begin{enumerate}
      \item[(i)] First, note that
      \begin{align*}
         S \cap T=[R-\bigcup_{i \in I}(P_i:M)] \cap [R-\bigcup_{j \in J}(Q_j:M)] \\
      =R-\cup\{(P:M)|(P:M) \in\{(P_i:M)\}_{i \in I} \cup \{(Q_j:M)\}_{j \in J} \}
      \end{align*}
      Now, let $P \in \widetilde{D}(S \cap T)$. Then,
       there exist $i_0 \in I$ or $j_0 \in J$ such that $(P:M) \subseteq (P_{i_0}:M) \subseteq \bigcup_{i \in I}(P_i:M)$ or $(P:M) \subseteq (Q_{j_0}:M) \subseteq \bigcup_{j \in J}(Q_j:M)$ since $\{(P_i:M)\}_{i \in I}\cup \{(Q_j:M)\}_{j \in J}$ satisfies $PA$. In that case, $(P:M) \cap [R-\bigcup_{i \in I}(P_i:M)]= \emptyset$ or $(P:M) \cap [R-\bigcup_{j \in J}(Q_j:M)]= \emptyset$. Hence, $P \in \widetilde{D}(S) \cup \widetilde{D}(T)$. Therefore, $\widetilde{D}(S \cap T) \subseteq \widetilde{D}(S) \cup \widetilde{D}(T)$. On the other hand, for any  $P \in  \widetilde{D}(S) \cup \widetilde{D}(T)$ it can be easily seen that $P \in \widetilde{D}(S \cap T)$ since $ S \cap T=R-\cup\{(P:M)|(P:M) \in\{(P_i:M)\}_{i \in I} \cup \{(Q_j:M)\}_{j \in J} \}$. Hence, $\widetilde{D}(S) \cup \widetilde{D}(T) \subseteq \widetilde{D}(S \cap T)$. As a result, $\widetilde{D}(S) \cup \widetilde{D}(T)=\widetilde{D}(S \cap T)$.
       \item[(ii)] Suppose that $\{(P_i:M)\}_{i \in I}\subseteq \operatorname{Min}(R)$. Let $K=\{P \in \operatorname{Spec}(M)|\ (P:M)=(P_{i_0}:M) \ \text{for some} \ i_0 \in I\}$ and $Q \in \widetilde{D}(S)$. Then, $(Q:M) \cap [R- \bigcup_{i\in I} (P_i:M)]=\emptyset$ so $(Q:M) \subseteq (P_{i_0}:M)$ for some $i_0 \in I$ since  $\{(P_i:M)\}_{i \in i}$ satisfies $PA$. Hence, $(Q:M)=(P_{i_0}:M)$ by our assumption. Therefore, $\widetilde{D}(S) \subseteq K$. Now, we assume that $\widetilde{D}(S)=\{P \in \operatorname{Spec}(M)|(P:M)=(P_{i_0}:M) \ \text{for some} \ i_0 \in  I\}$. Now, we will show that $(P_{i}:M)$ is a minimal prime ideal for each $i\in I$. Let $Q$ be a minimal prime ideal such that $Q\subseteq (P_i:M)$. Since $M$ is a finitely generated faithful module, by \cite[Proposition 8]{Lu2}, we have $(QM:M)=Q$. On the other hand, as $M$ is finitely generated, by \cite[Theorem 3.3]{McMo}, there exists a prime submodule $P^{*}$ such that $(P^{*}:M)=Q$. This gives $P^{*}\in \widetilde{D}(S)$. Since $(P_i:M)$'s are incomparable primes, we conclude that $Q=(P^{*}:M)=(P_i:M)$ which implies that $(P_i:M)$ is a minimal prime ideal of $R$.
       \item[(iii)] Suppose that $\operatorname{Max}(R)=\{(P_i:M)\}_{i \in I}$ and choose $P \in \widetilde{\operatorname{Spec}(M)}$. Then, there exists a maximal ideal $P_{i_0}$ such that $(P:M) \subseteq (P_{i_0}:M)$ for some $i_0 \in I$. This implies that $(P:M) \cap [R-\bigcup_{i \in I}(P_i:M)]= \emptyset$, i.e., $P \in \widetilde{D}(S)$ for some multiplicative subset $S$ of $R$. Hence, $\widetilde{D}(S)=\widetilde{\operatorname{Spec}(M)}$. 
  \end{enumerate}  
\end{proof}
For any set $X$, recall from \cite{ParsaTopb} that a binary relation $\mathcal{R}$ on $X$ is said to be a connected relation if for any $x,y \in X$, there exist elements $x_1,x_2,...,x_n$ of $X$ such that $x=x_1,y=x_n$ and $x_{i-1} \mathcal{R}x_i$ or $x_i\mathcal{R}x_{i-1}$ for all $2 \leq i \leq n$. In this case, the sequence $x_1,x_2,...,x_n$ is said to be a path from $x$ to $y$. 
\begin{theorem}\label{connected}
Let $M$ be an $R$-module. If inclusion $\subseteq$ is a connected relation on the set of prime submodules of $M$, then $\widetilde{\operatorname{Spec}(M)}$ is a connected space. In particular, the converse holds if $M$ is a multiplication module.
\end{theorem}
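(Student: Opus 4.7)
My plan is to exploit the description of the smallest neighborhood of a point given by Lemma \ref{neigh} and Proposition \ref{basis2} (ii), namely that $\widetilde{D}(R-(P:M)) = \{N \in \operatorname{Spec}(M) : (N:M) \subseteq (P:M)\}$ is the smallest open set containing $P$.

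For the forward direction, I will first prove the auxiliary claim: if $P,Q \in \operatorname{Spec}(M)$ satisfy $P \subseteq Q$, then $P$ and $Q$ lie in the same connected component of $\widetilde{\operatorname{Spec}(M)}$. Indeed, $P \subseteq Q$ gives $(P:M) \subseteq (Q:M)$, hence $P \in \widetilde{D}(R-(Q:M))$. If $\widetilde{\operatorname{Spec}(M)} = U \sqcup V$ is a disconnection with $P \in U$ and $Q \in V$, then $V$ is open and contains $Q$, so by Lemma \ref{neigh} the smallest neighborhood $\widetilde{D}(R-(Q:M))$ is contained in $V$; this forces $P \in V$, contradicting $U \cap V = \emptyset$. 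Now assuming inclusion is a connected relation, for any $P,Q \in \operatorname{Spec}(M)$ there is a path $P = P_1, P_2, \dots, P_n = Q$ with each consecutive pair comparable by inclusion, and the auxiliary claim applied inductively places all $P_i$ in a common connected component. Hence $\widetilde{\operatorname{Spec}(M)}$ is connected.

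For the converse, assume $M$ is a multiplication module and $\widetilde{\operatorname{Spec}(M)}$ is connected. I will define on $\operatorname{Spec}(M)$ the equivalence relation $P \sim Q$ meaning there exists a finite $\subseteq$-path joining $P$ and $Q$, and show every equivalence class $C$ is open in $\widetilde{\operatorname{Spec}(M)}$. Pick $P \in C$; for any $N \in \widetilde{D}(R-(P:M))$ we have $(N:M) \subseteq (P:M)$, so since $M$ is multiplication, $N = (N:M)M \subseteq (P:M)M = P$. Thus $N \subseteq P$ provides a one-step inclusion path, so $N \sim P$ and $N \in C$. Hence $\widetilde{D}(R-(P:M)) \subseteq C$, which shows $C$ is open. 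Since the equivalence classes partition $\widetilde{\operatorname{Spec}(M)}$ into disjoint open sets, each class is clopen, and connectedness forces a single class, i.e., inclusion is a connected relation.

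The step I expect to require the most care is the auxiliary claim in the forward direction; one must not confuse the comparison of $(P:M)$ and $(Q:M)$ with which smallest neighborhood contains which point, as the containment flips when one passes to $R - (\cdot :M)$. Everything else is a clean application of the Alexandrov structure and, in the converse, of the defining identity $N = (N:M)M$ for multiplication modules.
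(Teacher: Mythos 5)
Your proof is correct and follows essentially the same route as the paper: in the forward direction you use the Alexandrov smallest-neighborhood sets $\widetilde{D}(R-(Q:M))$ to show that $\subseteq$-comparable primes cannot be split by a clopen partition and then induct along a path (the paper phrases this as choosing the greatest index $t$ with $P_t$ in the clopen set and deriving the same two-case contradiction), and in the converse you show the $\subseteq$-path components are open via $N=(N:M)M\subseteq (P:M)M=P$, exactly as the paper does with its set $U_P$. The only cosmetic point is that your auxiliary claim really shows comparable primes lie in the same quasi-component (cannot be separated by a disconnection), which is all the induction needs.
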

\begin{proof}
Assume that $\subseteq$ is a connected relation on $\operatorname{Spec}(M)$. Now, we will show that $\widetilde{\operatorname{Spec}(M)}$ is a connected space. Let $U$ be a nonempty clopen set $U$ in $\widetilde{\operatorname{Spec}(M)}$. It is sufficient to show that $U=\widetilde{\operatorname{Spec}(M)}$. Choose $Q\in \widetilde{\operatorname{Spec}(M)}-U$ and $P\in U$. Since $\subseteq$ is a connected relation, there exists a path consisting of prime submodules $P_1=P, P_2,\ldots, P_n=Q$, where $P_i\subseteq P_{i-1}$ or $P_{i-1}\subseteq P_i$ for each $i=1,2,\ldots,n$. Choose the greatest integer $t$ such that $P_t\in U$. Then note that $P_{t+1}\in\widetilde{\operatorname{Spec}(M)}-U$. Now, we have two cases. \textbf{Case 1:} Let $P_{t+1}\subseteq P_t$. Since $U$ is open and $P_t\in U$, we have $P_{t+1}\in\widetilde{D}(R-(P_t:M))\subseteq U$ which is a contradiction. \textbf{Case 2:} Let $P_t\subseteq P_{t+1}$. Since $\widetilde{\operatorname{Spec}(M)}-U$ is open and $P_{t+1}\in \widetilde{\operatorname{Spec}(M)}-U$, we conclude that $P_t\in\widetilde{D}(R-(P_{t+1}:M))\subseteq\widetilde{\operatorname{Spec}(M)}-U$ which is again a contradiction. Thus, $U=\widetilde{\operatorname{Spec}(M)}$ and the clopen sets in $\widetilde{\operatorname{Spec}(M)}$ are the only $\emptyset$ and $\widetilde{\operatorname{Spec}(M)}$. Hence, $\widetilde{\operatorname{Spec}(M)}$ is a connected space. For the converse, assume that $\widetilde{\operatorname{Spec}(M)}$ is a connected space and $M$ is a multiplication module. Now, we will show that $\subseteq$ is a connected relation. Suppose to the contrary that $\subseteq$ is not a connected relation. Then there exist prime submodules $P,Q$ of $M$ such that there exist no path between $P$ and $Q$. Let $U_P$ be the set of prime submodules $N$ of $M$ such that there exists a path between $N$ and $P$. First, we will show that $U_P$ is a open set in $\widetilde{\operatorname{Spec}(M)}$. To see this, take a point $K\in U_P$. Then there exists a path between $K$ and $P$. Let $N\in \widetilde{D}(R-(K:M))$. Then we have $(N:M)\subseteq (K:M)$. Since $M$ is a multiplication module, we have $N\subseteq K$, and so there exists a path between $N$ and $P$. This implies that $N\in U$. Thus, $U_P$ is an open set in $\widetilde{\operatorname{Spec}(M)}$. Now, we will show that $\widetilde{\operatorname{Spec}(M)}-U_P$ is also open. Let $K\in\widetilde{\operatorname{Spec}(M)}-U_P$. If $\widetilde{D}(R-(K:M))\nsubseteq\widetilde{\operatorname{Spec}(M)}-U_P$, then there exists $N\in\widetilde{D}(R-(K:M))$ such that $N\in U_P$. Since $M$ is a multiplication module and $N\in \widetilde{D}(R-(K:M))$, we have $N\subseteq K$ and there exists a path between $N$ and $P$. Thus, there exists a path between $K$ and $P$, that is $K\in U_P$ which is a contradiction. Then we have $\widetilde{D}(R-(K:M))\subseteq\widetilde{\operatorname{Spec}(M)}-U_P$, that is, $\widetilde{\operatorname{Spec}(M)}-U_P$ is an open set. This contradicts with the connectedness of $\widetilde{\operatorname{Spec}(M)}$. Hence, $\subseteq$ is a connected relation. 
\end{proof}
Recall that an $R$-module $M$ is said to be uniserial if its submodules are linearly ordered by inclusion \cite{Salce}.
\begin{corollary}\label{uniconnec} Let $M$ be an $R$-module. The following statements are satisfied.
\begin{enumerate}
    \item[(i)] If $M$ is a uniserial module, then $\widetilde{\operatorname{Spec}(M)}$ is a connected space. 
    \item[(ii)] If $M$ is a vector space, then $\widetilde{\operatorname{Spec}(M)}$ is a connected space. 
    \item[(iii)] If $M$ is a zero dimensional module with $|\operatorname{Spec}(M)|\geq 2$, then $\widetilde{\operatorname{Spec}(M)}$ is not a connected space. 
\end{enumerate}
\end{corollary}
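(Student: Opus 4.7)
The plan is to dispatch (i) and (ii) via Theorem \ref{connected} and to prove (iii) directly by showing $\widetilde{\operatorname{Spec}(M)}$ is discrete; the main obstacle will be a rigidity lemma in (iii) forcing the colon map $P\mapsto(P:M)$ to be injective on $\operatorname{Spec}(M)$.

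For (i), a uniserial $M$ has all submodules linearly ordered, so for any two primes $P,Q$ either $P\subseteq Q$ or $Q\subseteq P$, and the two-term sequence $P,Q$ is already a path. Hence $\subseteq$ is a connected relation on $\operatorname{Spec}(M)$ and Theorem \ref{connected} applies. For (ii), if $R$ is a field then every proper submodule of $M$ is prime (since $rm\in P$ with $r\neq 0$ gives $m=r^{-1}(rm)\in P$), and $\{0\}$ itself is a prime submodule whenever $M\neq 0$. Since $\{0\}\subseteq P$ for every $P\in\operatorname{Spec}(M)$, the path $P,\{0\},Q$ shows $\subseteq$ is a connected relation and Theorem \ref{connected} again applies. (Equivalently, each $(P:M)$ is a proper prime ideal of the field $R$ and hence equals $0$, so by Lemma \ref{neigh} the smallest neighborhood of every point is $\widetilde{D}(R\setminus\{0\})=\operatorname{Spec}(M)$, giving the indiscrete and therefore connected topology.)

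For (iii), fix any $P\in\operatorname{Spec}(M)$. Because $M$ is zero-dimensional, $P$ is maximal, so $M/P$ is a simple $R$-module and $(P:M)$, being the annihilator of $M/P$, is a maximal ideal of $R$. By Lemma \ref{neigh} the smallest neighborhood of $P$ is
\[
\widetilde{D}(R-(P:M))=\{Q\in\operatorname{Spec}(M):(Q:M)\subseteq(P:M)\},
\]
and since each $(Q:M)$ is also maximal any such containment is forced to be equality, so this neighborhood equals $\{Q\in\operatorname{Spec}(M):(Q:M)=(P:M)\}$. Discreteness will follow once $P\mapsto(P:M)$ is shown to be injective on $\operatorname{Spec}(M)$.

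This injectivity is the technical heart of the argument. Suppose for contradiction that $P\neq Q$ are primes with $(P:M)=(Q:M)=\mathfrak{m}$; I will show $P\cap Q$ is prime, contradicting minimality of $P$ because $P\cap Q\subsetneq P$ (since $Q\not\subseteq P$ by maximality of $P$). Clearly $(P\cap Q:M)=(P:M)\cap(Q:M)=\mathfrak{m}$. To verify primeness, take $rm\in P\cap Q$ with $m\notin P$; if $r\notin\mathfrak{m}$, then by maximality of $\mathfrak{m}$ there is $s\in R$ with $1-rs\in\mathfrak{m}$, so $(1-rs)m\in\mathfrak{m}M\subseteq P$ and $srm\in P$, whence $m=(1-rs)m+srm\in P$, a contradiction. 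Thus $r\in\mathfrak{m}=(P\cap Q:M)$, so $P\cap Q$ is prime. Injectivity then yields $\widetilde{D}(R-(P:M))=\{P\}$ for every $P$, the space is discrete, and $|\operatorname{Spec}(M)|\geq 2$ makes it disconnected.
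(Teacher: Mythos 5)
Parts (i) and (ii) are correct and take the paper's own route: you check that $\subseteq$ is a connected relation on $\operatorname{Spec}(M)$ and invoke Theorem \ref{connected}; the paper does exactly this, only without spelling out the details (your side remark that for a vector space every prime submodule has colon ideal $0$, so the topology is indiscrete, is a pleasant alternative argument).

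Part (iii) contains a genuine gap, and it is a definitional one with real consequences. You deduce from zero-dimensionality that $M/P$ is simple, hence that $(P:M)$ is a maximal ideal of $R$; everything afterwards (collapsing the minimal neighbourhood $\widetilde{D}(R-(P:M))$ to $\{Q:(Q:M)=(P:M)\}$, and hence discreteness) rests on this. But the paper's definition, ``every prime submodule is maximal, or equivalently, every prime submodule is minimal'', is the order-theoretic one on $\operatorname{Spec}(M)$ (the stated equivalence only holds for that reading): it says $\operatorname{Spec}(M)$ is an antichain. A maximal element of $\operatorname{Spec}(M)$ need not be a maximal submodule, and $(P:M)$ need not be a maximal ideal, so this step is unjustified; worse, it cannot be patched in general. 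Take $R=\mathbb{Z}_{(q)}$ and $M=(R/qR)\oplus\mathbb{Q}$: one checks $\operatorname{Spec}(M)=\{P,Q\}$ with $P=(R/qR)\oplus 0$, $(P:M)=0$, and $Q=0\oplus\mathbb{Q}$, $(Q:M)=qR$; these are incomparable, so $M$ is zero-dimensional in the antichain sense, yet the open sets are exactly $\emptyset$, $\{P\}$, $\operatorname{Spec}(M)$, a Sierpi\'nski space, which is connected. So what you actually prove is (iii) for modules in which every prime submodule is a maximal submodule (equivalently, all colon ideals are maximal ideals); under that reading your argument is correct and is a genuinely different, self-contained route from the paper, whose own proof of (iii) silently uses the converse direction of Theorem \ref{connected}, established there only for multiplication modules. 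Two smaller points: in your rigidity lemma, maximality of $\mathfrak{m}$ is not needed to see that $P\cap Q$ is prime --- if $rm\in P\cap Q$ and $m\notin P$ (or $m\notin Q$), primeness of $P$ (resp.\ $Q$) already gives $r\in(P:M)=\mathfrak{m}=(P\cap Q:M)$; where maximality is genuinely needed is in excluding strict inclusions $(Q:M)\subsetneq(P:M)$, which is precisely the point that fails under the paper's stated definition.
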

\begin{proof}
 $(i),(ii):$ Note that $\subseteq$ is a connected relation, and the claim follows from Theorem \ref{connected}.\\
 $(iii):$ Note that $\subseteq$ is not a connected relation, and the claim follows from Theorem \ref{connected}.
\end{proof}
\begin{theorem}\label{intcl}
Let $M$ be an $R$-module and $N \subseteq \widetilde{\operatorname{Spec}(M)}$. Then
\begin{enumerate}
    \item[(i)] $\overline{N} = \{Q \in \operatorname{Spec}(M)|\ (P:M) \subseteq (Q:M)\ \text{for some}\ P\in N \}$. 
    \item[(ii)] $\overline{\bigcup_{i \in I}N_i}=\bigcup_{i \in I}\overline{N_i}$ for every family $\{N_i\}_{i \in I} \subseteq \widetilde{\operatorname{Spec}(M)}$.
    \item[(iii)]  $N^\circ=\{P \in N|\ \widetilde{D}(R-(P:M)) \subseteq N\}$.
    \item[(iv)] Let $M$ be a multiplication module. Then  $$N^{'}=\overline{N}-\{P \in N\ |\ P\ \text{is a  minimal element of}\ N\}.$$
\end{enumerate}
\end{theorem}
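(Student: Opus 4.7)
The plan is to push everything through the Alexandrov property established in Lemma \ref{neigh}. For every $P\in\widetilde{\operatorname{Spec}(M)}$, the set $\widetilde{D}(R-(P:M))$ is the smallest open neighborhood of $P$ and coincides with $\{Q\in\operatorname{Spec}(M): (Q:M)\subseteq(P:M)\}$. Consequently every topological query about closures, interiors and limit points reduces to a check on the partial order of colon ideals $\{(P:M):P\in\operatorname{Spec}(M)\}$, and, once $M$ is a multiplication module, to a check on the inclusion order on $\operatorname{Spec}(M)$ itself.

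For (i), I would apply the standard criterion that $Q\in\overline{N}$ iff every open neighborhood of $Q$ meets $N$; by minimality of $\widetilde{D}(R-(Q:M))$, this is equivalent to $\widetilde{D}(R-(Q:M))\cap N\neq\emptyset$, and unwinding the definition of $\widetilde{D}$ yields precisely the existence of some $P\in N$ with $(P:M)\subseteq(Q:M)$. Part (ii) then follows purely formally from (i): the condition ``some $P\in\bigcup_i N_i$ satisfies $(P:M)\subseteq(Q:M)$'' is visibly the same as ``for some index $i$ there exists $P\in N_i$ with $(P:M)\subseteq(Q:M)$'', the reverse inclusion $\bigcup_i\overline{N_i}\subseteq\overline{\bigcup_i N_i}$ being automatic in any topological space. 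Part (iii) is the dual argument: $P\in N^\circ$ iff $P$ has \emph{some} open neighborhood contained in $N$, which by Lemma \ref{neigh} is equivalent to $\widetilde{D}(R-(P:M))\subseteq N$.

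For (iv), I would first note that $Q\in N'$ iff $\widetilde{D}(R-(Q:M))\cap(N\setminus\{Q\})\neq\emptyset$, i.e.\ iff there exists $P\in N$ with $P\neq Q$ and $(P:M)\subseteq(Q:M)$. The multiplication hypothesis upgrades the latter containment to $P\subseteq Q$ via $P=(P:M)M\subseteq(Q:M)M=Q$. The equality $N'=\overline{N}-\{P\in N: P\text{ is minimal in }N\}$ then splits into two routine verifications. If $Q\in N'$ then $Q\in\overline{N}$ by (i), and if in addition $Q\in N$ the strict predecessor $P\subsetneq Q$ in $N$ shows $Q$ is not minimal in $N$. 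Conversely, if $Q\in\overline{N}$ and $Q$ is not a minimal element of $N$, I would split into the case $Q\notin N$ (any witness $P\in N$ supplied by (i) already differs from $Q$) and the case $Q\in N$ nonminimal (where a strict predecessor in $N$ exists directly), both of which yield $Q\in N'$. The main obstacle is precisely this final translation in (iv): without the multiplication assumption, $(P:M)\subseteq(Q:M)$ need not promote to $P\subseteq Q$, and the topological limit-point set could drift away from the submodule-theoretic description, which is exactly why the multiplication hypothesis is invoked only in this part.
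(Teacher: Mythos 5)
Your proposal is correct and follows essentially the same route as the paper: every part is reduced, via Lemma \ref{neigh}, to the smallest neighborhood $\widetilde{D}(R-(Q:M))$ and the resulting order-theoretic description, with the multiplication hypothesis invoked exactly where the paper uses it (to upgrade $(P:M)\subseteq(Q:M)$ with $P\neq Q$ to $P\subsetneq Q$ in part (iv)).
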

\begin{proof}
 \begin{enumerate}
     \item[(i)] Choose $Q \in \overline{N}$ and then one may have $\widetilde{D}(R-(Q:M)) \cap N \neq \emptyset$. It follows that there exists a prime submodule $P$ of $M$ such that $P \in \widetilde{D}(R-(Q:M)) \cap N$ implying that $(P:M) \subseteq (Q:M)$. Thus, $\overline{N} \subseteq \{Q \in \operatorname{Spec}(M)| \ (P:M) \subseteq (Q:M)\ \text{for some}\ P\in N \}$. Now, let $T \in \{Q \in \operatorname{Spec}(M)| \ (P:M) \subseteq (Q:M)\ \text{for some}\ P\in N \}$. Then there exists a prime submodule $P$ of $M$ such that $(P:M) \subseteq (T:M)$ which implies that $P \in \widetilde{D}(R-(T:M))$. Moreover, one can conclude that $\widetilde{D}(R-(T:M)) \cap N \neq \emptyset$, that is $T \in \ \overline{N}$.
     \item[(ii)]  The containment $\bigcup_{i \in I}\overline{N_i}\subseteq\overline{\bigcup_{i \in I}N_i}$ is always true. For the other containment, let $Q \in \overline{\bigcup_{i \in I}N_i}$. Then there exists a prime submodule $P\in \bigcup_{i \in I}N_i$ such that $(P:M) \subseteq (Q:M)$. Then there exists $i_0\in I$ such that $P \in N_{i_0}$ and $(P:M) \subseteq (Q:M)$. This implies that $Q \in \overline{N_{i_0}}\subseteq\bigcup_{i \in I}\overline{N_i}$, which completes the proof.
     \item[(iii)] Let $Q \in N^\circ$. Then, there exists an open set $U \subseteq N$ such that $Q \in U$ which implies that $D(R-(Q:M)) \subseteq U$ by Lemma \ref{neigh}. Thus, $D(R-(Q:M)) \subseteq N$, that is $Q \in \{P \in N|\ \widetilde{D}(R-(P:M)) \subseteq N\}$. Hence, $N^\circ\subseteq\{P \in N|\ \widetilde{D}(R-(P:M)) \subseteq N\}$. The other containment is true since $N^\circ$ is the largest open set contained in $N$.
     \item[(iv)] Let $P \in N^{'}$.  Then we have $[\widetilde{D}(R-(P:M))-\{P\}] \cap N \neq \emptyset$. This implies that there exists $K\in N-\{P\}$ such that $K\in\widetilde{D}(R-(P:M))$. Moreover, we get $(K:M)\subseteq (P:M)$. Since $M$ is a multiplication module and $N\neq P$, we have $K\subsetneq P$. Hence, $N^{'} \subseteq \overline{N}-\{P \in N|\ P \ \text{is a minimal element of} \ N\}$. Let $Q\in\overline{N}-\{P \in N|\ P \ \text{is a minimal element of} \ N\}$. It means that $\widetilde{D}(R-(Q:M)) \cap N \neq \emptyset$. Moreover, if $\widetilde{D}(R-(Q:M)) \cap N = \{Q\}$, then $Q$ is a minimal element in $N$ which is a contradiction. Hence $[\widetilde{D}(R-(Q:M))] \cap (N-\{Q\}) \neq \emptyset$, that is $Q \in N^{'}$. As a result, $\overline{N}-\{P \in N|\ P \ \text{is a minimal element of} \ N\} \subseteq N^{'}$.
 \end{enumerate}   
\end{proof}
\section{Separation Axioms in $\widetilde{\operatorname{Spec}(M)}$}
One of the fundamental concepts in a topology is classifying the separation axioms for a given topology. In this section, we investigate each separation axiom for $\widetilde{\operatorname{Spec}(M)}$. Recall from \cite{munkres} that a topological space $(X,\tau)$ is a \textit{$T_0$-space} if for every distinct points, there is an open set containing precisely one of them. Moreover, $X$ is called a \textit{$T_1$-space} if for any distinct points $x,y\in X$, there is an open set $O$ such that $x\in O$ and $y\not\in O$. Equivalently, $\overline{\{x\}}=\{x\}$ for every $x\in X$.
\begin{theorem}\label{T0}
$\widetilde{\operatorname{Spec}(M)}$ is $T_0$-space if and only if $(P:M)\neq (Q:M)$ for all distinct prime submodules $P,Q\in \operatorname{Spec}(M)$.  
\end{theorem}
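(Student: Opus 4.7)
The plan is to prove both directions using the explicit basis $\mathcal{B}' = \{\widetilde{D}(R-(P:M)) \mid P \in \operatorname{Spec}(M)\}$ established in Proposition \ref{basis2}(ii), together with the observation that membership in a basic open set $\widetilde{D}(S)$ depends only on the colon ideal $(P:M)$.

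For the forward direction, I would argue by contrapositive. Suppose there exist distinct prime submodules $P, Q \in \operatorname{Spec}(M)$ with $(P:M) = (Q:M)$. Then for any multiplicatively closed subset $S$ of $R$, the equivalences $P \in \widetilde{D}(S) \Leftrightarrow (P:M)\cap S = \emptyset \Leftrightarrow (Q:M)\cap S = \emptyset \Leftrightarrow Q \in \widetilde{D}(S)$ hold. Since the collection $\{\widetilde{D}(S)\}$ forms a basis by Theorem \ref{opset} and every open set is a union of basic ones, $P$ and $Q$ belong to exactly the same open sets. Hence $\widetilde{\operatorname{Spec}(M)}$ fails to be $T_0$.

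For the backward direction, assume $(P:M) \neq (Q:M)$ for every pair of distinct prime submodules, and fix distinct $P, Q \in \operatorname{Spec}(M)$. Since $(P:M)$ and $(Q:M)$ are distinct ideals, at least one of them is not contained in the other; without loss of generality suppose $(P:M) \not\subseteq (Q:M)$, and pick $r \in (P:M) \setminus (Q:M)$. Then $r \in (P:M) \cap (R-(Q:M))$, so this intersection is nonempty and $P \notin \widetilde{D}(R-(Q:M))$. Meanwhile $(Q:M) \cap (R-(Q:M)) = \emptyset$ trivially, so $Q \in \widetilde{D}(R-(Q:M))$. Thus $\widetilde{D}(R-(Q:M))$ is an open set in $\widetilde{\operatorname{Spec}(M)}$ containing $Q$ but not $P$, which witnesses the $T_0$-axiom for this pair.

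There is no real obstacle here; the argument is mostly bookkeeping once the saturated-basis description from Proposition \ref{basis2}(ii) is in hand. The only minor point to take care with is the symmetry of the WLOG step: one chooses whichever of $(P:M) \not\subseteq (Q:M)$ or $(Q:M) \not\subseteq (P:M)$ holds, and then the corresponding set $\widetilde{D}(R-(Q:M))$ or $\widetilde{D}(R-(P:M))$ gives the separating open neighborhood.
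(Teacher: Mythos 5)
Your proof is correct, but it is organized differently from the paper's. The paper proves both directions through the closure operator: it invokes the standard characterization that a space is $T_0$ exactly when distinct points have distinct closures, and then applies Theorem \ref{intcl}(i), which says $\overline{\{P\}}=\{Q\in\operatorname{Spec}(M)\mid (P:M)\subseteq (Q:M)\}$, so that equality of closures forces $(P:M)=(Q:M)$ and conversely. You instead work directly with the definition of $T_0$: in the forward direction you observe that membership in any basic open set $\widetilde{D}(S)$ depends only on the colon ideal, so two distinct primes with equal colon ideals are topologically indistinguishable; in the backward direction you exhibit an explicit separating neighborhood, namely $\widetilde{D}(R-(Q:M))$ when $(P:M)\not\subseteq (Q:M)$ (and symmetrically otherwise), using that $P\in\widetilde{D}(R-(Q:M))$ iff $(P:M)\subseteq (Q:M)$. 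Both arguments ultimately rest on the same fact --- the topology only sees $(P:M)$ --- but yours is more self-contained (it needs only Theorem \ref{opset}, not the closure computation of Theorem \ref{intcl}) and more constructive, while the paper's is shorter once Theorem \ref{intcl} is in hand. One cosmetic remark: you cite Proposition \ref{basis2}(ii), but all you actually use is that $\widetilde{D}(R-(Q:M))$ is open and that the $\widetilde{D}(S)$ form a basis, which is already Theorem \ref{opset}; this does not affect correctness.
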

\begin{proof}
Suppose that $\widetilde{\operatorname{Spec}(M)}$ is a $T_0$-space and take two distinct points $P,Q \in \operatorname{Spec}(M)$. Since $\widetilde{\operatorname{Spec}(M)}$ is a $T_0$-space, we have $\overline{\{P}\}\neq\overline{\{Q}\}$. This implies that $(P:M)\neq (Q:M)$ by Theorem \ref{intcl}. For the converse, assume that $(P:M)\neq (Q:M)$ for all distinct prime submodules $P,Q\in \operatorname{Spec}(M)$. Now, we will show that $\overline{\{P}\}\neq\overline{\{Q}\}$. Suppose that $\overline{\{P}\}=\overline{\{Q}\}$. Since $P\in \overline{\{P}\}=\overline{\{Q}\}$, by Theorem \ref{intcl}, we conclude that $(Q:M)\subseteq (P:M)$. Similarly, we get $(P:M)\subseteq (Q:M)$, that is $(P:M)=(Q:M)$ which is a contradiction. Hence, $\widetilde{\operatorname{Spec}(M)}$ is a $T_0$-space.
\end{proof}
\begin{example}
Consider the $\mathbb Z$-module $\mathbb Z^2$. One can see that $\widetilde{Spec}(\mathbb Z)$ is not a $T_0$-space since distinct prime submodules $P=0\times 0$ and $Q=0\times \mathbb Z$ have the same colon ideal $(P:\mathbb Z)=(Q:\mathbb Z)=0$. 
\end{example}
Recall from \cite{Azizimult} that an $R$-module $M$ is said to be a weak multiplication if $P=(P:M)M$ for every prime submodule $P$ of $M$.

\begin{corollary}
Let $M$ be a weak multiplication module. Then $\widetilde{\operatorname{Spec}(M)}$ is a $T_0$-space. In particular, if $M$ is a multiplication module, then $\widetilde{\operatorname{Spec}(M)}$ is a $T_0$-space. 
\end{corollary}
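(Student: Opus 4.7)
The plan is to invoke Theorem \ref{T0} directly: it suffices to verify that the map $P\mapsto (P:M)$ is injective on $\operatorname{Spec}(M)$ whenever $M$ is weak multiplication. So the strategy is purely algebraic — no topological argument is needed beyond citing the previous characterization.

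First I would fix two prime submodules $P,Q\in \operatorname{Spec}(M)$ and assume $(P:M)=(Q:M)$, aiming to conclude $P=Q$. By the definition of weak multiplication recalled just before the corollary, every prime submodule $P$ satisfies $P=(P:M)M$, and likewise $Q=(Q:M)M$. Substituting the equality of colon ideals into these expressions gives $P=(P:M)M=(Q:M)M=Q$. Contrapositively, distinct primes must have distinct colon ideals, which is exactly the hypothesis of Theorem \ref{T0}, so $\widetilde{\operatorname{Spec}(M)}$ is $T_0$.

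For the "in particular" clause, I would just note that a multiplication module (where every submodule $N$ satisfies $N=(N:M)M$) is automatically a weak multiplication module, since the defining equation is required for all submodules and hence in particular for all prime submodules. Therefore the first part of the corollary applies and yields the $T_0$ conclusion.

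There is no genuine obstacle here; the argument is essentially one line once Theorem \ref{T0} is available. The only point worth being careful about is making sure the logical direction is right — one uses the equality $P=(P:M)M$ to turn equality of colon ideals into equality of submodules, not the reverse. Since both $\Leftarrow$ and $\Rightarrow$ of Theorem \ref{T0} are available, no further topological verification is needed.
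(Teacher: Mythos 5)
Your proposal is correct and follows the same route as the paper: use the weak multiplication identity $P=(P:M)M$ to show distinct prime submodules have distinct colon ideals, then apply Theorem \ref{T0}, with the multiplication case following because multiplication modules are weak multiplication. You simply spell out the one-line argument that the paper leaves implicit.
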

\begin{proof}
If $M$ is a weak multiplication module, then for every distinct prime submodule $P,Q$ of $M$, we have $(P:M)\neq (Q:M)$. The rest follows from Theorem \ref{T0}.
\end{proof}

\begin{theorem}\label{T1}
$\widetilde{\operatorname{Spec}(M)}$ is a $T_1$-space if and only if $\{(Q:M)| \ Q\in \operatorname{Spec}(M)\}$ is the set of incomparable prime ideals.
\end{theorem}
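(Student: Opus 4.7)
The plan is to exploit the standard characterization that a space is $T_1$ iff every singleton is closed, and then feed this into the closure formula from Theorem \ref{intcl}(i), which says $\overline{\{P\}}=\{Q\in\operatorname{Spec}(M):(P:M)\subseteq (Q:M)\}$. This formula reduces the whole question to a purely order-theoretic statement about the collection of colon ideals $\{(Q:M):Q\in\operatorname{Spec}(M)\}$, so no further topology is needed beyond this single input.

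For the forward direction, I would assume $\widetilde{\operatorname{Spec}(M)}$ is $T_1$ and pick two distinct prime submodules $P,Q$. Suppose toward contradiction that the colon ideals are comparable, say $(P:M)\subseteq (Q:M)$. Theorem \ref{intcl}(i) then places $Q$ in $\overline{\{P\}}$, but $T_1$-ness gives $\overline{\{P\}}=\{P\}$, forcing $Q=P$, a contradiction. The other inclusion is ruled out symmetrically. Hence any two distinct colon ideals are incomparable. (A small bonus observation: the same argument applied to the equality case $(P:M)=(Q:M)$ with $P\neq Q$ also gives a contradiction, which confirms that $T_1$ implies the map $P\mapsto (P:M)$ is injective, consistent with $T_1\Rightarrow T_0$ and Theorem \ref{T0}.)

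For the converse, I would assume that for all distinct $P,Q\in\operatorname{Spec}(M)$ the ideals $(P:M),(Q:M)$ are incomparable, and show $\overline{\{P\}}=\{P\}$ for every $P$. Given $Q\in\overline{\{P\}}$, Theorem \ref{intcl}(i) yields $(P:M)\subseteq (Q:M)$; if $Q\neq P$ this contradicts incomparability, so $Q=P$. Thus every singleton is closed, giving $T_1$.

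The argument is essentially a one-line translation through Theorem \ref{intcl}(i), so there is no serious obstacle. The only subtle point worth flagging is the reading of ``the set of incomparable prime ideals'' in the statement: it must be parsed as the assertion that for $P\neq Q$ in $\operatorname{Spec}(M)$ the ideals $(P:M)$ and $(Q:M)$ are incomparable (in particular distinct). With that interpretation, both implications are immediate from the closure formula, and the proof reduces to two short contradiction arguments using nothing beyond Theorem \ref{intcl}(i) and the singleton-closure characterization of $T_1$.
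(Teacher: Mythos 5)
Your proposal is correct and follows essentially the same route as the paper: both directions are obtained by feeding the closure formula $\overline{\{P\}}=\{Q\in\operatorname{Spec}(M)\mid (P:M)\subseteq (Q:M)\}$ from Theorem \ref{intcl}(i) into the characterization of $T_1$ via closed singletons, with the same two short contradiction arguments.
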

\begin{proof}
We first note that $\overline{\{P\}}=\{Q\in \operatorname{Spec}(M)\ |\ (P:M)\subseteq (Q:M)\}$ by Theorem \ref{intcl} (i). Now, suppose that $\widetilde{\operatorname{Spec}(M)}$ is a $T_1$-space. Now, we will show that $\{(Q:M)|\ Q\in \operatorname{Spec}(M)\}$ is the set of incomparable prime ideals. Take two distinct prime submodules $P,Q$ of $M$. Now, we will show that $(P:M)\nsubseteq (Q:M)$ and $(Q:M)\nsubseteq (P:M)$. Without loss of generality, assume that $(P:M)\subseteq (Q:M)$. Then by Theorem \ref{intcl} (i), we have $Q\in \overline{\{P\}}$. Since $\widetilde{\operatorname{Spec}(M)}$ is a $T_1$-space, we have $Q\in \overline{\{P\}}=\{P\}$ which is a contradiction. Hence, $\{(Q:M)|\ Q\in \operatorname{Spec}(M)\}$ is the set of incomparable prime ideals. For the converse, assume that $\{(Q:M) |\ Q\in \operatorname{Spec}(M)\}$ is the set of incomparable prime ideals. Now, we will show that $\overline{\{P\}}=\{P\}$ for every $P\in \operatorname{Spec}(M)$. Let $P\in \operatorname{Spec}(M)$ and $Q\in \overline{\{P\}}$. Then by Theorem \ref{intcl} (i), we have $(P:M)\subseteq (Q:M)$. Then by assumption, we conclude that $P=Q$, that is, $\overline{\{P\}}=\{P\}$. Hence, $\widetilde{\operatorname{Spec}(M)}$ is a $T_1$-space.
\end{proof}
We say a topological space $(X,\tau)$ is called a \textit{$T_2$-space} or \textit{Hausdorff} if for every distinct point $x,y\in X$, there are open sets $O$ and $O'$ such that $x\in O$, $y\in O'$ and $O\cap O'=\emptyset$ \cite{Ricmond}. The following corollary can be obtained immediately by Theorem \ref{T1} together with the fact that Alexandrov $T_1$-spaces are discrete space.
\begin{theorem}\label{Discrete}
Let $M$ be an $R$-module. Then the following statements are equivalent:
\begin{enumerate}
    \item[(i)] $\widetilde{\operatorname{Spec}(M)}$ is discrete.
    \item[(ii)] $\{(Q:M)|\ Q\in \operatorname{Spec}(M)\}$ is the set of incomparable prime ideals.
    \item[(iii)] $\widetilde{\operatorname{Spec}(M)}$ is a $T_1$-space.
    \item[(iv)] $\widetilde{\operatorname{Spec}(M)}$ is Hausdorff.
\end{enumerate}
\end{theorem}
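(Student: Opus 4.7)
The plan is to prove the circular chain $(i) \Rightarrow (iv) \Rightarrow (iii) \Rightarrow (ii) \Rightarrow (i)$, exploiting that three of these implications are either trivial or already established, while the only nontrivial step is $(iii) \Rightarrow (i)$, which rests crucially on the Alexandrov property established in Lemma \ref{neigh}.

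First I would note that $(i) \Rightarrow (iv)$ is immediate, since every discrete topology is Hausdorff, and $(iv) \Rightarrow (iii)$ is the standard implication that Hausdorff implies $T_1$. The equivalence $(ii) \Leftrightarrow (iii)$ is exactly Theorem \ref{T1}, so I would simply invoke it. This reduces the entire proof to establishing $(iii) \Rightarrow (i)$, which is the only implication with real content.

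For $(iii) \Rightarrow (i)$, the key observation is that $\widetilde{\operatorname{Spec}(M)}$ is an Alexandrov space by Lemma \ref{neigh}, with smallest neighborhood of $P$ given by $\widetilde{D}(R-(P:M))$. The goal is to show that each singleton $\{P\}$ is open, and for this it suffices to prove $\widetilde{D}(R-(P:M)) = \{P\}$. Pick any $Q \in \widetilde{D}(R-(P:M))$ with $Q \neq P$. By the $T_1$ assumption, there is an open set $U$ with $P \in U$ and $Q \notin U$. By minimality of $\widetilde{D}(R-(P:M))$ among open neighborhoods of $P$, we have $\widetilde{D}(R-(P:M)) \subseteq U$, forcing $Q \in U$, a contradiction. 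Hence $\widetilde{D}(R-(P:M)) = \{P\}$ and $\widetilde{\operatorname{Spec}(M)}$ is discrete.

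The hard part, in principle, is the step $(iii) \Rightarrow (i)$, but it is not genuinely difficult once one recognizes that Lemma \ref{neigh} gives the Alexandrov structure with an explicit smallest neighborhood; the argument then reduces to the standard folklore fact that an Alexandrov $T_1$-space is automatically discrete. No multiplication or faithfulness hypotheses on $M$ are needed, so the proof is uniform across all modules with nonempty prime spectrum.
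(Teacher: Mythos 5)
Your proposal is correct and follows essentially the same route as the paper: the substantive content in both is the equivalence $(ii)\Leftrightarrow(iii)$ from Theorem \ref{T1} together with the fact that an Alexandrov $T_1$-space is discrete, which the paper cites as known and you prove explicitly via the smallest neighborhood $\widetilde{D}(R-(P:M))$ from Lemma \ref{neigh}. The remaining implications (discrete $\Rightarrow$ Hausdorff $\Rightarrow$ $T_1$) are handled trivially in both arguments, so the two proofs differ only in the bookkeeping of which implications are chained.
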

\begin{proof}
    $(i) \Rightarrow (ii)\Rightarrow (iii)$: Follows from Theorem \ref{T1}. \\
    $(iii) \Rightarrow (i)\Rightarrow (iv):$ Follows from the fact that Alexandrov $T_1$-spaces are discrete.\\
    $(iv) \Rightarrow (iii)$: Clear.
\end{proof}
A topological space $(X,\tau)$ is called a \textit{$T_3$-space} if for any closed set $C$ and $x\in X-C$, there are disjoint open sets $U$ and $V$ so that $C\subseteq O$ and $x\in V$ \cite{munkres}.
\begin{theorem}\label{T3}
 Let $M$ be an $R$-module. $\widetilde{\operatorname{Spec}(M)}$ is a  $T_3$-space if and only if $\widetilde{D}(R-(P:M))$ is closed for all $P \in \widetilde{\operatorname{Spec}(M)}$.   
\end{theorem}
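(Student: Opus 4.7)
The plan is to leverage the Alexandrov structure of $\widetilde{\operatorname{Spec}(M)}$ established in Lemma \ref{neigh}, which guarantees that $U_P := \widetilde{D}(R-(P:M))$ is the unique smallest open neighborhood of each point $P \in \widetilde{\operatorname{Spec}(M)}$. The crux of both implications is the standard equivalence between the closed-set form of regularity used in the paper and the neighborhood-shrinking form: a space is $T_3$ if and only if for every point $x$ and every open neighborhood $U$ of $x$, there is an open neighborhood $V$ of $x$ with $\overline{V} \subseteq U$. Once this equivalence is in hand, minimality of $U_P$ will force $\overline{U_P}=U_P$ as soon as the space is $T_3$, and conversely clopenness of each $U_P$ will supply the separation by hand.

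For the forward direction, I would assume $\widetilde{\operatorname{Spec}(M)}$ is $T_3$ and fix an arbitrary $P \in \widetilde{\operatorname{Spec}(M)}$. Applying the neighborhood-shrinking form of regularity to the point $P$ and its open neighborhood $U_P$, I obtain an open set $V$ with $P \in V \subseteq \overline{V} \subseteq U_P$. Since $U_P$ is the smallest open neighborhood of $P$ by Lemma \ref{neigh}, the inclusion $U_P \subseteq V$ also holds, so the chain collapses to $V = \overline{V} = U_P$. Hence $U_P=\widetilde{D}(R-(P:M))$ is closed, as required.

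For the converse, I would assume that every $\widetilde{D}(R-(P:M))$ is closed and verify the closed-set form of $T_3$ directly. Given a closed subset $C$ and a point $Q \in \widetilde{\operatorname{Spec}(M)}-C$, the complement $\widetilde{\operatorname{Spec}(M)}-C$ is an open neighborhood of $Q$, and so it contains the minimal neighborhood $U_Q=\widetilde{D}(R-(Q:M))$. By hypothesis $U_Q$ is clopen, so the pair $U_Q$ and $\widetilde{\operatorname{Spec}(M)}-U_Q$ consists of two disjoint open sets with $Q \in U_Q$ and $C \subseteq \widetilde{\operatorname{Spec}(M)}-U_Q$, delivering the $T_3$ separation.

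The main obstacle is essentially pedagogical rather than technical: one must invoke (and perhaps note in passing) the equivalence between the two standard formulations of regularity, since extracting the conclusion $\overline{U_P}\subseteq U_P$ from the closed-set form directly is awkward. Once this equivalence is granted, both implications reduce to a single application of the Alexandrov property from Lemma \ref{neigh}.
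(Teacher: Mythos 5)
Your proposal is correct, and both implications go through. The forward direction is in spirit the paper's argument: the paper applies its closed-set formulation of $T_3$ to the closed set $\widetilde{\operatorname{Spec}(M)}-\widetilde{D}(R-(P:M))$ and the point $P$, and then uses the minimality of the Alexandrov neighborhood $\widetilde{D}(R-(P:M))$ (Lemma \ref{neigh}) to force the separating open set to coincide with that complement; you reach the same collapse via the standard (and correctly invoked, valid for arbitrary spaces) neighborhood-shrinking reformulation of regularity, so the difference there is only in packaging. The converse is where you genuinely diverge: the paper writes an arbitrary open set as $\bigcup_i \widetilde{D}(R-(P_i:M))$ using Proposition \ref{basis2}(ii) and then invokes Theorem \ref{intcl}(ii) (closure commutes with arbitrary unions in this space) to conclude that every open set is closed, from which regularity follows; you instead verify the separation directly, noting that for $Q\notin C$ the minimal neighborhood $\widetilde{D}(R-(Q:M))$ lies in the open complement of $C$ and is clopen by hypothesis, so it and its complement separate $Q$ from $C$. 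Your route is shorter and avoids both Proposition \ref{basis2} and Theorem \ref{intcl}; the paper's route yields as a by-product the slightly stronger structural fact that, under the hypothesis, every open subset of $\widetilde{\operatorname{Spec}(M)}$ is clopen.
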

\begin{proof}
 $(\Rightarrow)$: Suppose that $\widetilde{\operatorname{Spec}(M)}$ is a $T_3$-space and choose $P \in \widetilde{\operatorname{Spec}(M)}$. Then there exists an open subset $V$ of $\widetilde{\operatorname{Spec}(M)}$ containing $\widetilde{\operatorname{Spec}(M)}-\widetilde{D}(R-(P:M))$ such that $V \cap \widetilde{D}(R-(P:M))= \emptyset$. This gives $V\subseteq\widetilde{\operatorname{Spec}(M)}-\widetilde{D}(R-(P:M))$ which implies that $V=\widetilde{\operatorname{Spec}(M)}-\widetilde{D}(R-(P:M))$ is an open set, that is $\widetilde{D}(R-(P:M))$ is closed.\\
 $(\Leftarrow):$ It is sufficient to show that every closed set is open, or equivalently every open set is closed. Let $O$ be an open set in $\widetilde{\operatorname{Spec}(M)}$. Then by Proposition \ref{basis2}, we have $O=\bigcup_{i \in I}\widetilde{D}(R-(P_i:M))$. On the other hand, by the assumption, $\widetilde{D}(R-(P_i:M))$ is closed. Then by Theorem \ref{intcl} (ii), we have $\overline{O}=\overline{\bigcup\widetilde{D}(R-(P_i:M)}=\bigcup{\overline{\widetilde{D}(R-(P_i:M)}}=\bigcup{\widetilde{D}(R-(P_i:M)}=O$ which implies that $O$ is closed in $\widetilde{\operatorname{Spec}(M)}$. Hence, $\widetilde{\operatorname{Spec}(M)}$ is a $T_3$-space.
\end{proof}

\section{Construction of a Lindelöf Space which is not quasi-compact}

This section examines the quasi-compact, locally compact, and Lindelöf properties of $\widetilde{\operatorname{Spec}(M)}$. The final part presents a method for constructing a Lindelöf space that is not quasi-compact. A topological space $(X,\tau)$ is called a Lindelöf space if every open cover of $X$ has a countable subcover. Moreover, $X$ is called quasi-compact if every open cover of $X$ has a finite subcover. Additionally, $X$ is locally compact if every point $x \in X$ has a compact neighborhood \cite{munkres}. Since $\widetilde{\operatorname{Spec(M)}}$ is an Alexandrov space, one can see that $\widetilde{\operatorname{Spec}(M)}$ is a locally compact space. It is well known that the Zariski topology on $\operatorname{Spec}(R)$, the spectrum of a ring $R$, is always quasi-compact, which implies that $\operatorname{Spec}(R)$ is locally compact and Lindelöf. However, $\widetilde{\operatorname{Spec}(R)}$ is not necessarily quasi-compact or Lindelöf, as demonstrated by the following example.
\begin{example}
Let $R = C(\mathbb{R})$ denote the ring of all real-valued continuous functions. Each maximal ideal of $R$ is of the form $M_x = \{f \in C(\mathbb{R}) : f(x) = 0\}$ for some $x \in \mathbb{R}$. Therefore, $R$ possesses uncountably many maximal ideals. Define $S_x = R \setminus M_x$. It follows that $\widetilde{\operatorname{Spec}(R)} = \bigcup_{x \in \mathbb{R}} \widetilde{D}(S_x)$. Consequently, $\widetilde{\operatorname{Spec}(R)}$ is not a Lindelöf space.
\end{example}

\begin{theorem}
Let $M$ be an $R$-module, $\{P_i\}_{i \in I}$ be a collection of prime submodules of $M$ and $S=R-\bigcup_{i \in I}(P_i:M)$. If $I$ is finite, then $\widetilde{D}(S)$ is a quasi-compact subspace of $\widetilde{\operatorname{Spec}(M)}$.
\end{theorem}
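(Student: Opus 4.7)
My plan is to exploit prime avoidance together with the basis $\mathcal{B}'$ from Proposition \ref{basis2}(ii) to reduce any open cover of $\widetilde{D}(S)$ to a subcover of size at most $|I|$.

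First I would pass to a basic cover: by Proposition \ref{basis2}(ii), any open cover of $\widetilde{D}(S)$ can be refined to one by basic sets of the form $\widetilde{D}(R-(Q_j:M))$ for some family $\{Q_j\}_{j\in J}\subseteq\operatorname{Spec}(M)$. So assume $\widetilde{D}(S)\subseteq\bigcup_{j\in J}\widetilde{D}(R-(Q_j:M))$; it suffices to extract a finite subcover, since a finite subcover by basic sets refining the original cover yields a finite subcover from the original.

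Next I would record the key membership characterization: a prime submodule $P$ lies in $\widetilde{D}(S)$ precisely when $(P:M)\cap S=\emptyset$, i.e.\ $(P:M)\subseteq\bigcup_{i\in I}(P_i:M)$. Since $I$ is finite and each $(P_i:M)$ is a prime ideal of $R$, the classical prime avoidance lemma yields $(P:M)\subseteq (P_{i_0}:M)$ for some $i_0\in I$. In particular, each $P_i$ itself satisfies $(P_i:M)\subseteq\bigcup_{k\in I}(P_k:M)$ trivially, so $P_i\in\widetilde{D}(S)$.

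Now I would build the finite subcover. For each $i\in I$, since $P_i\in\widetilde{D}(S)$, there is some index $j_i\in J$ with $P_i\in\widetilde{D}(R-(Q_{j_i}:M))$; this means $(P_i:M)\cap (R-(Q_{j_i}:M))=\emptyset$, i.e.\ $(P_i:M)\subseteq (Q_{j_i}:M)$. I claim $\widetilde{D}(S)\subseteq\bigcup_{i\in I}\widetilde{D}(R-(Q_{j_i}:M))$. Indeed, for any $P\in\widetilde{D}(S)$, prime avoidance supplies $i_0\in I$ with $(P:M)\subseteq (P_{i_0}:M)\subseteq (Q_{j_{i_0}}:M)$, whence $(P:M)\cap (R-(Q_{j_{i_0}}:M))=\emptyset$, i.e.\ $P\in\widetilde{D}(R-(Q_{j_{i_0}}:M))$. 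This is a finite subcover of size at most $|I|$, proving quasi-compactness.

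The only subtle point is the prime avoidance step, which is clean because the $(P_i:M)$ are genuinely prime ideals (prime submodules give prime colon ideals, as noted in the introduction), so no additional hypothesis on $R$ is needed; the finiteness of $I$ is exactly what makes the argument work and is essential.
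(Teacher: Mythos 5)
Your proof is correct and follows essentially the same route as the paper: reduce to a cover by the basic sets $\widetilde{D}(R-(Q:M))$, use finite prime avoidance on the prime colon ideals $(P_i:M)$ to see that every $P\in\widetilde{D}(S)$ satisfies $(P:M)\subseteq (P_{i_0}:M)$ for some $i_0$, and then cover $\widetilde{D}(S)$ by the finitely many basic sets chosen for the points $P_i$. Your write-up even makes explicit two steps the paper leaves implicit (the reduction to basic covers and the observation that each $P_i\in\widetilde{D}(S)$), so nothing further is needed.
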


\begin{proof}
Note that 
       \begin{align*}
           \widetilde{D}(S)=\{ P \in \operatorname{Spec}(M)|\ (P:M) \cap [R-\bigcup_{i \in I}(P_i:M)]=\emptyset\}\\
           = \{ P \in \operatorname{Spec}(M)|\ (P:M) \subseteq \bigcup_{i \in I}(P_i:M)\} \\
           = \{ P \in \operatorname{Spec}(M)|\ (P:M) \subseteq (P_{i_0}:M), \text{ for some } i_0 \in I\}
       \end{align*}
       since $\{(P_i:M)\}_{i \in I}$ satisfies $PA$. Let $\mathcal{A} \subseteq \widetilde{\operatorname{Spec}(M)}$ and $\widetilde{D}(S) \subseteq \bigcup_{N \in \mathcal{A}}\widetilde{D}(R-(N:M))$. Then for any $P \in \widetilde{D}(S)$, we have $(P:M) \cap [R-(N:M)]= \emptyset$ for some $N \in \mathcal{A}$. Since for any $i_0 \in I$, there exists $N_{i_0} \in \mathcal{A}$ such that $(P_{i_0}:M)\subseteq (N_{i_0}:M)$, we get $\widetilde{D}(S) \subseteq \bigcup_{{i_0} \in I}\widetilde{D}(R-(N_{i_0}:M))$. Hence, $\widetilde{D}(S)$ is a quasi-compact subspace of $\widetilde{\operatorname{Spec}(M)}$. 
\end{proof}

\begin{corollary}\label{comp}
$\widetilde{\operatorname{Spec}(M)}$ is a compact space if and only if $\widetilde{\operatorname{Spec}(M)}= \\ \bigcup_{i=1}^n \widetilde{D}(R-(P_i:M))$ for some collections of prime submodules $\{P_i\}_{i=1}^{n}$. 
\end{corollary}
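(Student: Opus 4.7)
The plan is to prove both implications using the fact established earlier that $\mathcal{B}'=\{\widetilde{D}(R-(P:M)):P\in\operatorname{Spec}(M)\}$ is a basis (Proposition \ref{basis2}(ii)) together with the Alexandrov smallest-neighborhood property recorded in Lemma \ref{neigh}. The key observation driving both directions is that every $P\in\operatorname{Spec}(M)$ trivially satisfies $P\in\widetilde{D}(R-(P:M))$, since $(P:M)\cap\bigl(R-(P:M)\bigr)=\emptyset$ by definition of complement.

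For the forward direction, I would assume $\widetilde{\operatorname{Spec}(M)}$ is quasi-compact and produce the desired finite representation directly. By the preceding observation, the family $\{\widetilde{D}(R-(P:M))\}_{P\in\operatorname{Spec}(M)}$ is an open cover of $\widetilde{\operatorname{Spec}(M)}$. Quasi-compactness then gives a finite subcover $\widetilde{\operatorname{Spec}(M)}=\bigcup_{i=1}^{n}\widetilde{D}(R-(P_{i}:M))$, which is exactly the stated form.

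For the converse, assume $\widetilde{\operatorname{Spec}(M)}=\bigcup_{i=1}^{n}\widetilde{D}(R-(P_{i}:M))$ and let $\{U_{\alpha}\}_{\alpha\in\Lambda}$ be any open cover. For each $i=1,\dots,n$ pick an index $\alpha_{i}\in\Lambda$ with $P_{i}\in U_{\alpha_{i}}$. By Lemma \ref{neigh}, $\widetilde{D}(R-(P_{i}:M))$ is the smallest open neighborhood of $P_{i}$, so $\widetilde{D}(R-(P_{i}:M))\subseteq U_{\alpha_{i}}$ for every $i$. Taking the union over $i=1,\dots,n$ yields $\widetilde{\operatorname{Spec}(M)}=\bigcup_{i=1}^{n}\widetilde{D}(R-(P_{i}:M))\subseteq\bigcup_{i=1}^{n}U_{\alpha_{i}}$, producing a finite subcover of the original cover.

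There is no real obstacle here, since quasi-compactness is equivalent to every cover by basis elements having a finite subcover, and the Alexandrov structure lets us replace an arbitrary open set containing a point by the canonical basic neighborhood $\widetilde{D}(R-(P:M))$. Perhaps the only subtle point worth stating explicitly in the write-up is the equivalence between covers by arbitrary open sets and covers by the basic sets in $\mathcal{B}'$, which is immediate from Lemma \ref{neigh}.
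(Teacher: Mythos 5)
Your proof is correct, and both implications are sound: the forward direction (covering by the basic sets $\widetilde{D}(R-(P:M))$, each containing its own $P$, and extracting a finite subcover) is exactly the step the paper dismisses as ``clear''. Where you genuinely diverge is the converse. The paper reduces it to the preceding theorem: it sets $S=R-\bigcup_{i=1}^{n}(P_i:M)$, observes $\widetilde{D}(S)=\widetilde{\operatorname{Spec}(M)}$, and invokes the quasi-compactness of $\widetilde{D}(S)$ for finite index sets, a result whose proof rests on finite prime avoidance and on checking only covers by the basic sets $\widetilde{D}(R-(N:M))$. You instead argue directly from the Alexandrov property (Lemma \ref{neigh}): given any open cover, each $P_i$ lies in some member $U_{\alpha_i}$, the smallest-neighborhood property forces $\widetilde{D}(R-(P_i:M))\subseteq U_{\alpha_i}$, and the hypothesis $\widetilde{\operatorname{Spec}(M)}=\bigcup_{i=1}^{n}\widetilde{D}(R-(P_i:M))$ then yields the finite subcover $\{U_{\alpha_1},\dots,U_{\alpha_n}\}$. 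Your route is more elementary and self-contained: it avoids prime avoidance entirely, works with arbitrary open covers rather than only basic ones (so the usual ``basic covers suffice'' reduction is not even needed), and makes the role of the smallest neighborhoods explicit. What the paper's route buys is economy of exposition, since it recycles a theorem that is itself more general (quasi-compactness of $\widetilde{D}(S)$ for any finite family of primes), of which this corollary is the special case $\widetilde{D}(S)=\widetilde{\operatorname{Spec}(M)}$.
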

\begin{proof}
The first direction of the statement is clear. Conversely, assume that $\widetilde{\operatorname{Spec}(M)}=\bigcup_{i=1}^n \widetilde{D}(R-(P_i:M))$ for some collections of prime submodules $\{P_i\}_{i=1}^{n}$. Now, put $S=R-\bigcup_{i=1}^{n}(P_i:M)$. Then note that $\widetilde{D}(S)=\widetilde{\operatorname{Spec}(M)}$. The rest follows from previous theorem. 
\end{proof}
In \cite[Corollary 11]{ParsaTopb} the authors showed that if $R$ is a semi-local ring, then $\widetilde{\operatorname{Spec}(R)}$ is a compact space. The following theorem improves the result in \cite[Corollary 11]{ParsaTopb}.
\begin{theorem}\label{tcomp}
Let $M$ be a finitely generated faithful module over a ring $R$. Then $\widetilde{\operatorname{Spec}(M)}$ is a compact space if and only if $R$ is a semi-local ring.
\end{theorem}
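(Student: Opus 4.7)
\medskip

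\noindent\textbf{Proof plan.} The plan is to reduce both directions to Corollary \ref{comp} by exploiting the bijective correspondence, in the finitely generated faithful setting, between prime ideals of $R$ and prime submodules of $M$ via the colon operation. The key ingredients are already used in the proof of Theorem \ref{priavo}(ii): for a finitely generated faithful $M$, \cite[Proposition 8]{Lu2} gives $(\mathfrak{a}M:M)=\mathfrak{a}$ for every ideal $\mathfrak{a}$, and \cite[Theorem 3.3]{McMo} guarantees that for every prime ideal $\mathfrak{p}$ of $R$ there exists a prime submodule $P$ of $M$ with $(P:M)=\mathfrak{p}$.

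For the direction $(\Leftarrow)$, suppose $R$ is semi-local with maximal ideals $\mathfrak{m}_1,\dots,\mathfrak{m}_n$. By the existence result above, choose prime submodules $P_1,\dots,P_n$ of $M$ with $(P_i:M)=\mathfrak{m}_i$. For any $P\in \operatorname{Spec}(M)$ the ideal $(P:M)$ is prime (hence proper) and so contained in some $\mathfrak{m}_{i_0}=(P_{i_0}:M)$. By the description of the basic open sets in Proposition \ref{basis2}(ii) this means $P\in \widetilde{D}(R-(P_{i_0}:M))$, so $\widetilde{\operatorname{Spec}(M)}=\bigcup_{i=1}^{n}\widetilde{D}(R-(P_i:M))$ and Corollary \ref{comp} yields compactness.

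For $(\Rightarrow)$, assume $\widetilde{\operatorname{Spec}(M)}$ is compact. By Corollary \ref{comp}, there are finitely many prime submodules $P_1,\dots,P_n$ with $\widetilde{\operatorname{Spec}(M)}=\bigcup_{i=1}^{n}\widetilde{D}(R-(P_i:M))$. Now let $\mathfrak{m}$ be any maximal ideal of $R$; by \cite[Theorem 3.3]{McMo} pick a prime submodule $P$ with $(P:M)=\mathfrak{m}$. Since $P\in \widetilde{D}(R-(P_{i_0}:M))$ for some $i_0$, we get $\mathfrak{m}=(P:M)\subseteq (P_{i_0}:M)$, and because $(P_{i_0}:M)$ is a proper ideal and $\mathfrak{m}$ is maximal, this forces $\mathfrak{m}=(P_{i_0}:M)$. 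Hence every maximal ideal of $R$ lies in the finite set $\{(P_1:M),\dots,(P_n:M)\}$, so $R$ is semi-local.

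The main obstacle is not hard topology but making sure the correspondence between $\operatorname{Max}(R)$ and the colons $(P_i:M)$ is fully justified: one must check that $(P_{i_0}:M)$ is proper (immediate, since $P_{i_0}$ is prime hence proper) and that every maximal ideal of $R$ is realized as $(P:M)$ for some prime submodule $P$ (which is exactly where the finitely generated faithful hypothesis is used through \cite[Theorem 3.3]{McMo}). Once these two points are in place, everything else follows formally from Corollary \ref{comp} and Proposition \ref{basis2}(ii).
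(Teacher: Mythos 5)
Your proposal is correct and follows essentially the same route as the paper: both directions reduce to Corollary \ref{comp}, using the finitely generated faithful hypothesis to realize each maximal ideal of $R$ as $(P:M)$ for a prime submodule $P$, and then matching maximal ideals with the finitely many colons $(P_i:M)$. The only cosmetic difference is that you invoke \cite[Theorem 3.3]{McMo} to produce a prime submodule with prescribed colon, whereas the paper takes $P=QM$ directly via \cite[Proposition 8]{Lu2}; both ingredients already appear in the paper's own arguments, so the proofs are substantively identical.
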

\begin{proof}
Suppose that $\widetilde{\operatorname{Spec}(M)}$ is a compact space. Then by Corollary \ref{comp}, $\widetilde{\operatorname{Spec}(M)}=\bigcup_{i=1}^n \widetilde{D}(R-(P_i:M))$ for some collections of prime submodules $\{P_i\}_{i=1}^{n}$. Choose a maximal ideal $Q$ of $R$. Since $M$ is a finitely generated faithful module, by \cite[Proposition 8]{Lu2}, $(QM:M)=Q$ is a maximal ideal. Thus $QM$ is a prime submodule of $M$. This gives $QM\in\widetilde{\operatorname{Spec}(M)}=\bigcup_{i=1}^n \widetilde{D}(R-(P_i:M))$. Then there exists $i=1,2,\ldots,n$ such that $(QM:M)=Q\subseteq (P_i:M)$. Again since $Q$ is a maximal ideal, we have $Q=(P_i:M)$. Hence, $R$ is a semi-local ring. For the converse, assume that $R$ is a semi-local ring, say $\operatorname{Max}(R)=\{Q_1,Q_2,\ldots,Q_k\}$. Since $M$ is a finitely generated faithful module, by \cite[Proposition 8]{Lu2}, $Q_1M,Q_2M,\ldots,Q_kM$ are prime submodules of $M$ and $(Q_iM:M)=Q_i$ for each $i=1,2,\ldots,k$. Also, it is easy to see that $\widetilde{\operatorname{Spec}(M)}=\bigcup_{i=1}^n \widetilde{D}(R-Q_i)=\bigcup_{i=1}^n \widetilde{D}(R-(Q_iM:M))$. Hence by Corollary \ref{comp}, $\widetilde{\operatorname{Spec}(M)}$ is a compact space. 
\end{proof}

\begin{theorem}\label{lindelöf}
$\widetilde{\operatorname{Spec}(M)}$ is a Lindelöf space if and only if $\widetilde{\operatorname{Spec}(M)}=\bigcup_{i=1}^\infty \widetilde{D}(R-(P_i:M))$ for some collections of prime submodules $\{P_i\}_{i=1}^{\infty}$. 
\end{theorem}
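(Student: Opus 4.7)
The plan is to imitate the structure of Corollary \ref{comp} for the compact case, replacing finite subcovers by countable ones. Both directions will hinge on Proposition \ref{basis2}(ii), which gives the basis $\mathcal{B}'=\{\widetilde{D}(R-(P:M)): P\in \operatorname{Spec}(M)\}$, together with Lemma \ref{neigh}, which ensures that $\widetilde{D}(R-(P:M))$ is the smallest neighborhood of $P$.

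For the forward direction, I would start with the obvious open cover $\widetilde{\operatorname{Spec}(M)}=\bigcup_{P\in\operatorname{Spec}(M)}\widetilde{D}(R-(P:M))$ supplied by Proposition \ref{basis2}(ii) (each $P$ trivially lies in $\widetilde{D}(R-(P:M))$ since $(P:M)\cap(R-(P:M))=\emptyset$). Assuming $\widetilde{\operatorname{Spec}(M)}$ is Lindelöf, this open cover admits a countable subcover, which is exactly a representation $\widetilde{\operatorname{Spec}(M)}=\bigcup_{i=1}^{\infty}\widetilde{D}(R-(P_i:M))$ for some sequence $\{P_i\}_{i=1}^{\infty}\subseteq\operatorname{Spec}(M)$, as required.

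For the converse, suppose such a countable decomposition exists and let $\{U_{\alpha}\}_{\alpha\in\Lambda}$ be an arbitrary open cover of $\widetilde{\operatorname{Spec}(M)}$. For each $i\in\mathbb{N}$, pick $\alpha_i\in\Lambda$ with $P_i\in U_{\alpha_i}$. By Lemma \ref{neigh}, $\widetilde{D}(R-(P_i:M))$ is the smallest neighborhood of $P_i$, so $\widetilde{D}(R-(P_i:M))\subseteq U_{\alpha_i}$ for every $i$. Taking the countable union gives
\[
\widetilde{\operatorname{Spec}(M)}=\bigcup_{i=1}^{\infty}\widetilde{D}(R-(P_i:M))\subseteq\bigcup_{i=1}^{\infty}U_{\alpha_i},
\]
which is the desired countable subcover, so $\widetilde{\operatorname{Spec}(M)}$ is Lindelöf.

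There is no real obstacle here: the statement is essentially a word-for-word transcription of the compact case with ``finite'' replaced by ``countable,'' and every ingredient (the Alexandrov property yielding smallest neighborhoods, and the basis $\mathcal{B}'$) is already in place. The only thing to watch for is that the smallest-neighborhood property from Lemma \ref{neigh} is precisely what lets any open set containing $P_i$ automatically contain the entire basic open $\widetilde{D}(R-(P_i:M))$, which is what makes countably many $P_i$'s suffice.
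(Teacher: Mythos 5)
Your proposal is correct and follows essentially the same route as the paper: the forward direction covers the space by the basic opens $\widetilde{D}(R-(P:M))$ from Proposition \ref{basis2}(ii) and extracts a countable subcover, and the converse uses the smallest-neighborhood property of Lemma \ref{neigh} to replace each $\widetilde{D}(R-(P_i:M))$ by a member of an arbitrary open cover containing $P_i$. Your write-up just makes explicit the forward direction that the paper dispatches in one line.
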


\begin{proof}
 The first direction of the statement follows from Proposition \ref{basis2} ii). Now, suppose $\widetilde{\operatorname{Spec}(M)}=\bigcup_{i=1}^\infty \widetilde{D}(R-(P_i:M))$ for some collections of prime submodules $\{P_i\}_{i=1}^{\infty}$. Assume moreover that $\operatorname{Spec}(M)=\bigcup_{j\in J}O_j$ for some open sets $O_j$ in $\widetilde{\operatorname{Spec}(M)}$. Then, there exist open sets $O_i$ such that $P_i\in O_i$ for each $P_i\in \operatorname{Spec}(M)$. It follows that $\widetilde{D}(R-(P_i:M))\subseteq O_i$ by Lemma \ref{neigh}. Therefore, $\operatorname{Spec}(M)=\cup_{i=1}^\infty O_i$.
\end{proof}
\begin{theorem}\label{tLin}
Let $M$ be a finitely generated faithful module over a ring $R$. Then $\widetilde{\operatorname{Spec}(M)}$ is a Lindelöf space if and only if $R$ has countably many maximal ideals. 
\end{theorem}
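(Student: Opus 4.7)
The plan is to mirror the proof strategy of Theorem \ref{tcomp} (the semi-local/compact case), replacing ``finite'' by ``countable'' throughout, and to use Theorem \ref{lindelöf} as the main bridge between the topological condition and the existence of a countable basic cover. The two crucial ring-theoretic facts I will invoke are: (a) every prime ideal of $R$ is contained in some maximal ideal, and (b) for a finitely generated faithful $R$-module $M$, one has $(QM:M)=Q$ for every ideal $Q$, and in particular $QM$ is a prime submodule of $M$ whenever $Q$ is a prime ideal (this is \cite[Proposition 8]{Lu2}, already used by the authors in Theorem \ref{tcomp}).

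For the forward direction, I would assume $\widetilde{\operatorname{Spec}(M)}$ is Lindel\"of. By Theorem \ref{lindelöf} there exists a countable family of prime submodules $\{P_i\}_{i=1}^{\infty}$ with $\widetilde{\operatorname{Spec}(M)}=\bigcup_{i=1}^{\infty}\widetilde{D}(R-(P_i:M))$. For an arbitrary maximal ideal $Q\in\operatorname{Max}(R)$, fact (b) gives that $QM$ is a prime submodule with $(QM:M)=Q$, so $QM\in\widetilde{\operatorname{Spec}(M)}$. Therefore $QM\in\widetilde{D}(R-(P_i:M))$ for some index $i$, which translates to $Q=(QM:M)\subseteq(P_i:M)$. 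Maximality of $Q$ forces $Q=(P_i:M)$, so the assignment $Q\mapsto i$ embeds $\operatorname{Max}(R)$ into $\mathbb{N}$, yielding countability.

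For the converse, write $\operatorname{Max}(R)=\{Q_i\}_{i=1}^{\infty}$. By fact (b) each $Q_iM$ is prime with $(Q_iM:M)=Q_i$. I would then show $\widetilde{\operatorname{Spec}(M)}=\bigcup_{i=1}^{\infty}\widetilde{D}(R-(Q_iM:M))$: for any $P\in\operatorname{Spec}(M)$, the prime ideal $(P:M)$ lies inside some $Q_i$ by fact (a), hence $(P:M)\cap(R-Q_i)=\emptyset$, i.e.\ $P\in\widetilde{D}(R-Q_i)=\widetilde{D}(R-(Q_iM:M))$. Theorem \ref{lindelöf} then delivers the Lindel\"ofness of $\widetilde{\operatorname{Spec}(M)}$.

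There is no real obstacle here beyond bookkeeping; the substance of the argument is already packaged inside Theorem \ref{lindelöf} and \cite[Proposition 8]{Lu2}. The only point that warrants care is making the forward direction genuinely give a bijection/surjection of $\operatorname{Max}(R)$ onto a subset of $\mathbb{N}$ rather than just an inclusion into $\{(P_i:M)\}$; this is handled automatically once one observes that distinct maximal ideals $Q$ pin down distinct indices via $Q=(P_i:M)$, but even without distinctness one immediately gets $|\operatorname{Max}(R)|\le\aleph_0$, which is what is needed.
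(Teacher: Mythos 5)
Your proof is correct and is essentially the paper's intended argument: the paper's proof of Theorem \ref{tLin} consists only of the remark that it "follows from the previous theorem and an argument analogous to that used in the proof of Theorem \ref{tcomp}", and what you have written out is precisely that argument, with Theorem \ref{lindelöf} as the bridge and \cite[Proposition 8]{Lu2} supplying $(QM:M)=Q$. One small caution: that proposition (as used in the paper) yields $(QM:M)=Q$ and the primeness of $QM$ for \emph{maximal} ideals $Q$, not for arbitrary ideals as your fact (b) suggests, but since you only ever apply it to maximal ideals the argument is unaffected.
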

\begin{proof}
The result follows from the previous theorem and employing an argument analogous to that used in the proof of Theorem \ref{tcomp}.
\end{proof}
Recall that a free module $M$ over a ring $R$, every basis of $M$ has the same cardinality, and this is denoted by $\operatorname{rank}_{R}(M)$. If $\operatorname{rank}_{R}(M)<\infty$, $M$ is said to be a free module of finite rank \cite{Sharp}. Now, we are ready to give a method for constructing a Lindelöf space which is not compact. 
\begin{corollary}
\begin{enumerate}
    \item[(i)] Let $M$ be a finitely generated faithful module over a ring $R$ with countably infinitely many maximal ideals. Then $\widetilde{\operatorname{Spec}(M)}$ is a Lindelöf space which is not a quasi-compact.
    \item[(ii)] Let $M$ be a free module of finite rank over the ring $R$ with countably infinitely many maximal ideals. Then $\widetilde{\operatorname{Spec}(M)}$ is a Lindelöf space which is not quasi-compact.
\end{enumerate}
\end{corollary}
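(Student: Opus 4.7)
The plan is to derive both parts directly from the characterizations in Theorem \ref{tcomp} and Theorem \ref{tLin}, which pin down quasi-compactness and Lindelöfness of $\widetilde{\operatorname{Spec}(M)}$ in terms of the cardinality of $\operatorname{Max}(R)$ for finitely generated faithful modules. So the strategy is to show that the hypotheses in (i) and (ii) place $R$ in the regime ``countably infinite $\operatorname{Max}(R)$,'' which by those theorems is exactly Lindelöf but not semi-local (hence not quasi-compact).

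For part (i), the module $M$ is already assumed to be finitely generated and faithful, and $R$ has countably infinitely many maximal ideals by hypothesis. Applying Theorem \ref{tLin} directly yields that $\widetilde{\operatorname{Spec}(M)}$ is a Lindelöf space. For the failure of quasi-compactness, I would invoke Theorem \ref{tcomp}: since $R$ has infinitely many maximal ideals it is not semi-local, hence $\widetilde{\operatorname{Spec}(M)}$ cannot be quasi-compact. This gives (i) with essentially no calculation.

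For part (ii), the plan is to reduce to (i) by verifying that a free module of finite rank over $R$ is both finitely generated and faithful. Finite generation is immediate, since a finite basis is in particular a finite generating set. Faithfulness follows because if $r \in R$ annihilates $M$, then $r$ annihilates each basis element $e_i$; by linear independence of the basis this forces $r = 0$, so $\operatorname{Ann}_R(M) = 0$. Once this observation is recorded, (ii) is a consequence of (i) applied to $M$.

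The main obstacle, if any, is purely bookkeeping: one should make sure that the hypothesis ``countably infinitely many maximal ideals'' is parsed as ``countable, but not finite,'' so that Theorem \ref{tLin} supplies the Lindelöf conclusion while Theorem \ref{tcomp} (contrapositively) rules out quasi-compactness. No deeper argument appears to be required, because the heavy lifting has already been done in the two preceding theorems.
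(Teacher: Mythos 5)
Your proposal is correct and follows essentially the same route as the paper, whose proof simply cites Theorem \ref{tcomp} and Theorem \ref{tLin}; you merely make explicit the (standard) verification that a free module of finite positive rank is finitely generated and faithful, which reduces (ii) to (i).
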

\begin{proof}
Follows from Theorem \ref{tcomp} and Theorem \ref{tLin}.
\end{proof}

\begin{example}
\begin{enumerate}
    \item[(i)] Let $n>1$ be a fixed integer, and consider $\mathbb{Z}$-module $M=\mathbb{Z}^n$. Then $\widetilde{\operatorname{Spec}(M)}$ is a Lindelöf space which is not quasi-compact.
    \item[(ii)] Let $K$ be a countably infinite field and $R=K[X]$, where $X$ is an indeterminate over $K$, and consider $M=R^n$ for some integer $n>1$. Then $\widetilde{\operatorname{Spec}(M)}$ is a Lindelöf space which is not quasi-compact. In particular, 
    the $\widetilde{\operatorname{Spec}(M)}$ topology of $R=\mathbb{Z}_p[X]$-module $R^n$ is Lindelöf but not a quasi-compact space for every prime number $p$ and every integer $n>1$.
\end{enumerate}
\end{example}

\end{document}